\newtheorem{theorem}{Theorem}[section]
\newtheorem{corollary}[theorem]{Corollary}
\newtheorem{lemma}[theorem]{Lemma}
\newtheorem{problem}[theorem]{Problem}
\newtheorem{proposition}[theorem]{Proposition}
\newtheorem{remark}[theorem]{Remark}
\def\J#1#2#3{ \left\{ #1,#2,#3 \right\} }
\begin{document}

\title[Weak-local triple derivations]{Weak-local triple derivations on C$^*$-algebras and JB$^*$-triples}

\date{\today}

\author[M.J. Burgos, J.C. Cabello, A.M. Peralta]{Mar{\'i}a J. Burgos, Juan Carlos Cabello, Antonio M. Peralta}

\address{Campus de Jerez, Facultad de Ciencias Sociales y de la Comunicaci\'on
Av. de la Universidad s/n, 11405 Jerez, C\'adiz, Spain}
\email{maria.burgos@uca.es}

\address{Departamento de An{\'a}lisis Matem{\'a}tico, Universidad de Granada,
Facultad de Ciencias 18071, Granada, Spain}
\email{jcabello@ugr.es, aperalta@ugr.es}

\thanks{Authors partially supported by the Spanish Ministry of Economy and Competitiveness and European Regional Development Fund project no. MTM2014-58984-P and Junta de Andaluc\'{\i}a grants FQM375 and FQM290.}

\keywords{C$^*$-algebra; JB$^*$-triple; triple derivation; weak-local triple derivation; weak-local derivation}

\subjclass[2010]{47B47; 46L57; 17C65; 47C15; 46L05; 46L08}

\begin{abstract} We prove that every weak-local triple derivation on a JB$^*$-triple $E$ (i.e. a linear map $T: E\to E$ such that for each $\phi \in E^*$ and each $a\in E$, there exists a triple derivation $\delta_{a,\phi} : E\to E$, depending on $\phi$ and $a$, such that $\phi T(a) = \phi \delta_{a,\phi} (a)$) is a (continuous) triple derivation. We also prove that conditions \begin{enumerate}[$(h1)$]\item $\{a,{T(b)},c\}=0$ for every $a,b,c$ in $E$ with $a, c\perp b$;
\item $P_2 (e) T(a) = -Q(e) T(a)$ for every norm-one element $a$ in $E$, and every tripotent $e$ in $E^{**}$ such that $e\leq s(a)$ in $E_2^{**} (e)$, where $s(a)$ is the support tripotent of $a$ in $E^{**}$,
\end{enumerate} are necessary and sufficient to show that a linear map $T$ on a JB$^*$-triple $E$ is a triple derivation.
\end{abstract}

\maketitle

\section{Introduction}

A \emph{triple derivation} on a JB$^*$-triple $E$ is a linear mapping $\delta: E\to E$ satisfying the following Leibniz' rule \begin{equation}\label{eq triple der}\delta\{a,b,c\} = \{\delta(a), b,c\} +\{a,\delta(b),c\}+\{a,b,\delta(c)\},
\end{equation} for every $a,b,c\in E$. T. Barton and Y. Friedman proved, in \cite[Corollary 2.2]{BarFri}, that the geometric structure of JB$^*$-triples assures that every triple derivation on a JB$^*$-triple.\smallskip

Motivated by the different studies on local (associative) derivations on C$^*$-algebras (compare \cite{Kad90, Shu, John01}), M. Mackey introduced and presented, in \cite{Mack}, the first study on local triple derivations on JB$^*$-triples. We recall that a \emph{local triple derivation} on a JB$^*$-triple $E$ is a linear map $T : E\to E$ such that for each $a$ in $E$ there exists a triple derivation $\delta_{a}: E\to E$, depending on $a$, satisfying $T(a) = \delta_a (a).$ It is due to Mackey that every continuous local triple on a JBW$^*$-triple (i.e. a JB$^*$-triple which is also a dual Banach space) is a triple derivation (see \cite[Theorem 5.11]{Mack}). The first and third author of this note, in collaboration with F.J. Fern{\'a}ndez-Polo, establish in \cite{BurPolPerBLMS14} that every local triple derivation on a JB$^*$-triple is continuous and a triple derivation.\smallskip

In the setting of C$^*$-algebras, B.A. Essaleh, M.I. Ram{\'i}rez, and the third author of this note explore the notions of weak-local derivation and weak$^*$-local derivation on C$^*$-algebras and von Neumann algebras, respectively (see \cite{EssaPeRa16,EssaPeRa16b}). Going back in history, we remind that, according to Kadison's definition, a \emph{local derivation} from a C$^*$-algebra $A$ into a Banach $A$-bimodule, $X$, is a linear map $T: A\to X$ such that for each $a\in A$ there exits a derivation $D_{a}: A\to X$, depending on $a$, satisfying $T(a) = D_a(a)$. B.E. Johnson proved in \cite{John01} that every local derivation from a C$^*$-algebra $A$ into a Banach $A$-bimodule is continuous and a derivation. Following \cite{EssaPeRa16}, a linear mapping $T: A\to X$ is called a \emph{weak-local derivation} if for each $\phi \in X^*$ and each $a\in A$, there exists a derivation $D_{a,\phi} : A\to X$, depending on $\phi$ and $a$, such that $\phi T(a) = \phi D_{a,\phi} (a)$. It is shown in \cite[Theorems 2.1 and 3.4]{EssaPeRa16} (see also \cite{EssaPeRa16b}) that every weak-local derivation on a C$^*$-algebra is continuous and a derivation. Similarly, if $W$ is a von Neumann algebra (i.e. a C$^*$-algebra which is also a dual Banach space) a \emph{weak$^*$-local derivation} on $W$ is a linear map $T: W\to W$ such that for each $\phi\in W_*$ and each $a\in W$, there exists a derivation $D_{a,\phi} : W\to W$, depending on $\phi$ and $a$, such that $\phi T(a) = \phi D_{a,\phi} (a)$. Weak$^*$-local derivations on a von Neumann algebra are automatically continuous and derivations (see \cite[Theorems 2.8 and 3.1]{EssaPeRa16}).\smallskip

In the wider setting of JB$^*$-triples, weak-local triple derivations seem a natural notion to explore in this line. We shall say that a linear map $T$ on a JB$^*$-triple $E$ is a \emph{weak-local triple derivation} if for each $\phi \in E^*$ and each $a\in E$, there exists a triple derivation $\delta_{a,\phi} : E\to E$, depending on $\phi$ and $a$, such that $\phi T(a) = \phi \delta_{a,\phi} (a)$. \emph{Weak$^*$-local triple derivations} on a JBW$^*$-triple are similarly defined.\smallskip

In this note we prove that every weak-local triple derivation on a JB$^*$-triple is continuous and a triple derivation (see Theorems \ref{t continuity of weak-local triple derivations} and \ref{t weak-local derivations}). 
The proof of the main result will be derived with appropriate generalizations of the technical results stated in \cite{BurPolPerBLMS14} for local triple derivations. Among the new results obtained in the study of the weak-local characterization of triple derivations, we also obtain that conditions \begin{enumerate}[$(h1)$]\item $\{a,{T(b)},c\}=0$ for every $a,b,c$ in $E$ with $a, c\perp b$;
\item $P_2 (e) T(a) = -Q(e) T(a)$ for every norm-one element $a$ in $E$, and every tripotent $e$ in $E^{**}$ such that $e\leq s(a)$ in $E_2^{**} (e)$, where $s(a)$ is the support tripotent of $a$ in $E^{**}$,
\end{enumerate} are necessary and sufficient to show that a linear map $T$ on a JB$^*$-triple $E$ is a triple derivation (see Theorem \ref{t weak-local derivations}). Actually, $(h2)$ is enough to guarantee that $T$ is continuous (see Proposition \ref{p second necessary condition for weaklocal der implies continuity}).

\section{The results}

The class $\mathcal{J}$ of those complex Banach spaces whose open unit ball is a bounded symmetric domain strictly includes all C$^*$-algebras (see \cite{Harris74}) and all complex Hilbert spaces. There is a undoubted advantage in determining elements in the bigger class by a list of geometric and algebraic axioms. The characterization in precise axiomatic terms of those elements in $\mathcal{J}$ is due to W. Kaup (see \cite{Ka83}), who proved that every element in $\mathcal{J}$ is a \emph{JB$^*$-triple}, that is, a complex Banach space $E$ admitting a continuous triple product $\{.,.,.\} : E\times E\times E \to E$, $(x,y,z)\mapsto \{x,y,z\},$ which is linear and symmetric in $x$ and $z$ and conjugate linear in $y$, satisfies the so-called Jordan identity
$$\{x,y,\{a,b,c\}\} = \{\{x,y,a\},b,c\} - \{a,\{y,x,b\},c\} + \{a,b,\{x,y,c\}\},$$ for every $x,y,a,b,c\in E$, and, for each $a\in E$, the operator $x\mapsto L(a,a) (x):= \{a,a,x\}$ is hermitian with non-negative spectrum and $\|L(a,a)\| = \|a\|^2$.\smallskip

Every C$^*$-algebra $A$ is a JB$^*$-triple when it is equipped with the product $\{a,b,c\} = \frac12 (a b^* c + c b^* a)$. The category of JB$^*$-triples is strictly bigger and contains, for example, all complex Hilbert spaces and all JB$^*$-algebras.\smallskip

JB$^*$-triples whose underlying Banach space is a dual space are called \emph{JBW$^*$-triples}. In particular, every von Neumann algebra is a JBW$^*$-triple. The second dual $E^{**}$ of a JB$^*$-triple $E$ is JBW$^*$-triple (cf. \cite[Corollary 3.3.5]{Chu2012}). Every JBW$^*$-triple $W$ admits a unique isometric predual $W_*$, and its triple product is separately $\sigma(W,W_*)$-continuous (cf. \cite{BarTi}). We refer to the monograph \cite{Chu2012} for the notions not included in this paper.\smallskip

Let $E$ be a JB$^*$-triple. It is known that the bitranspose, $\delta^{**}: E^{**}\to E^{**},$ of a triple derivation $\delta : E\to E$ is a triple derivation (compare \cite[Statement  (4) in page 713]{BurPolPerBLMS14}).

\subsection{Continuity of weak-local triple derivations}\ \ \newline

We have already commented that T.J. Barton and Y. Friedman conducted the first result on automatic continuity of triple derivations on an arbitrary JB$^*$-triple. The core of their arguments appears in \cite[Theorem 2.1]{BarFri}, where they establish that every triple derivation $\delta$ on a JB$^*$-triple $E$ is dissipative, i.e., for every $\phi\in E^*$ and every $x\in E$ with $\phi(x) = 1=\|x\|=\|\phi\|$, we have $\Re\hbox{e} \phi\delta(x)\leq 0$ (actually, $\Re\hbox{e} \phi\delta(x)= 0$, for every $\phi$ and $x$ as above). It can be therefore deduced, by the theory of dissipative operators, that every triple derivation on a JB$^*$-triple is continuous (compare \cite[Proposition 3.1.15]{BraRo}). This is enough to show that every local triple derivation on a JB$^*$-triple is dissipative and hence continuous (see \cite[Theorem 2.8]{BurPolPerBLMS14}). We can actually see that the optimal notion to inherit the dissipative nature of triple derivations is the notion of weak-local triple derivation. Namely, let $T: E\to E$ be a weak-local triple derivation on a JB$^*$-triple. Suppose we take $\phi\in E^*$ and $x\in E$ with $\phi(x) = 1=\|x\|=\|\phi\|,$ then $$\Re\hbox{e} \phi T(x) = \Re\hbox{e} \phi \delta_{x,\phi} (x)\leq 0,$$ which shows that $T$ is dissipative.

\begin{theorem}\label{t continuity of weak-local triple derivations} Every weak-local triple derivation on a JB$^*$-triple is dissipative and hence continuous.
\end{theorem}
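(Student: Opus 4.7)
The strategy is essentially already outlined in the paragraph preceding the statement, so my plan is to turn that sketch into a clean proof in two short steps: first establish dissipativity, then invoke the standard automatic continuity theorem for dissipative operators.

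For the first step, I would fix a weak-local triple derivation $T:E\to E$ and take an arbitrary pair $(\phi,x)\in E^*\times E$ with $\phi(x)=1=\|x\|=\|\phi\|$. By the definition of a weak-local triple derivation there is a (genuine) triple derivation $\delta_{x,\phi}:E\to E$ satisfying $\phi T(x)=\phi\delta_{x,\phi}(x)$. Applying Barton and Friedman's result \cite[Theorem 2.1]{BarFri}, which asserts that every triple derivation on a JB$^*$-triple is dissipative, to $\delta_{x,\phi}$ at the norming pair $(\phi,x)$ yields
$$\Re\mathrm{e}\,\phi\delta_{x,\phi}(x)\le 0.$$
Combining the two displayed equalities gives $\Re\mathrm{e}\,\phi T(x)\le 0$ for every norming pair $(\phi,x)$, which is precisely the statement that $T$ is dissipative.

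For the second step I would invoke the classical fact that every everywhere-defined dissipative linear operator on a Banach space is automatically norm-continuous (see \cite[Proposition 3.1.15]{BraRo}). Since $T$ is linear, defined on all of $E$, and dissipative by the previous paragraph, this immediately yields the continuity of $T$ and completes the proof.

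There is really no obstacle here: the entire content is packaged into the definition of weak-local triple derivation, which is tailored so that dissipativity of triple derivations transfers pointwise to $T$ via the norming functional. The only care needed is the observation that Barton-Friedman's dissipativity inequality holds \emph{pointwise at each norming pair}, so that one is free to choose a different triple derivation $\delta_{x,\phi}$ for each such pair without affecting the conclusion. Once this is noted, the argument is a two-line reduction to \cite[Theorem 2.1]{BarFri} and \cite[Proposition 3.1.15]{BraRo}.
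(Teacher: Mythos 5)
Your proposal is correct and follows exactly the argument the paper itself uses: transfer Barton--Friedman's pointwise dissipativity inequality for triple derivations to $T$ through the norming pair $(\phi,x)$ via the defining identity $\phi T(x)=\phi\delta_{x,\phi}(x)$, then conclude continuity from \cite[Proposition 3.1.15]{BraRo}. No further comment is needed.
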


The theory of dissipative maps doesn't work when we are dealing with weak$^*$-local triple derivation on a JBW$^*$-triple. We do not know the answer to the next problem.

\begin{problem}\label{problem continuity of weak*-local} Is every weak$^*$-local triple derivation on a JBW$^*$-triple automatically continuous?
\end{problem}

More results on automatic continuity of maps related to weak-local triple derivations will be discussed in the next subsection.

\subsection{Weak local triple derivations are triple derivations}\ \ \newline

The first statement in the next result is a consequence of the Hahn-Banach theorem, while the second is clear. This simple result is included to simplify the arguments in the subsequent results, and the proof is left to the reader.

\begin{lemma}\label{l projections} Let $P: X\to X$ be a real linear projection on a Banach space. Suppose $x$ is an element in $X$
satisfying that for every $\phi \in X^*$ with $P^{*} (\phi) = \phi P = \phi$ we have $\phi (x) =0$. Then $P(x)=0$. If $\phi$ is a functional in $X^*$ such that
$\phi (a)=0$, for every $a= P(a) \in X$, then $P^{*} (\phi) =0$.$\hfill\Box$
\end{lemma}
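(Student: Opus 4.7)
The plan is to handle the two statements separately, using only the fact that $P^2=P$ and standard duality.

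For the first statement, I would fix an arbitrary $\psi\in X^{*}$ and try to force $\psi(P(x))=0$; once this is established for every $\psi$, the Hahn--Banach separation theorem gives $P(x)=0$. To produce a functional matching the hypothesis, set $\phi:=P^{*}(\psi)=\psi P\in X^{*}$. The projection identity $P^{2}=P$ yields
\[
\phi P \;=\; (\psi P)\,P \;=\; \psi\,P^{2} \;=\; \psi P \;=\; \phi,
\]
so $\phi$ satisfies $P^{*}(\phi)=\phi$. The assumed annihilation condition on $x$ therefore gives $0=\phi(x)=\psi(P(x))$. Since $\psi\in X^{*}$ was arbitrary, $P(x)=0$.

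For the second statement, the computation is direct. Given any $y\in X$, the element $P(y)$ lies in the range of $P$ and, because $P^{2}=P$, satisfies $P(y)=P(P(y))$; that is, $P(y)$ is a fixed point of $P$. The hypothesis on $\phi$ therefore forces $\phi(P(y))=0$ for every $y\in X$, which is precisely $(P^{*}(\phi))(y)=0$ for every $y\in X$. Hence $P^{*}(\phi)=0$.

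There is essentially no obstacle here: both implications rest on $P^{2}=P$, together with the (trivial) Hahn--Banach consequence that a vector annihilated by every continuous functional on a Banach space must be zero. I would accordingly keep the write-up short and leave it to the reader, as the authors suggest.
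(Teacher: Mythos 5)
Your proof is correct and is exactly the argument the paper leaves to the reader: for the first claim, given an arbitrary $\psi\in X^*$ one applies the hypothesis to $\phi:=\psi P$ (which satisfies $\phi P=\phi$ by $P^2=P$) and concludes $P(x)=0$ from the Hahn--Banach fact that a vector annihilated by every functional is zero, and the second claim is the immediate fixed-point computation. The only point worth making explicit is that, since $P$ is merely real-linear, $\psi P$ is a priori only real-linear, so $X^*$ must here be understood as the dual of $X$ viewed as a real Banach space (a convention already implicit in the statement's writing $P^{*}(\phi)=\phi P$, and harmless in the paper's applications, where one may pass to real parts); with that reading your argument goes through verbatim.
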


Tripotents in a JB$^*$-triple play a role similar to that of projections in a C$^*$-algebra. An element $e$ in a JB$^*$-triple $E$ is said to be a \emph{tripotent} if $\{e,e,e\} =e$. Every tripotent determines a decomposition of $E$ in terms of the eigenspaces of the operator $L(e,e)$. More precisely, $$E= E_0 (e) \oplus E_1(e) \oplus E_2 (e),$$ where $E_j (e) = \{ x\in E : L(e,e) (x)= \frac{j}{2} x\}$. This decomposition is called the \emph{Peirce decomposition} of $E$ relative to $e$. These subspaces satisfy the following Peirce rules $$\J {E_{i}(e)}{E_{j} (e)}{E_{k} (e)}\subseteq E_{i-j+k} (e)$$ if $i-j+k \in \{ 0,1,2\},$ and $\J {E_{i}(e)}{E_{j} (e)}{E_{k} (e)}=\{0\}$ otherwise. Moreover, $\J {E_{2} (e)}{E_{0}(e)}{E} = \J {E_{0} (e)}{E_{2}(e)}{E} =\{0\}.$ The corresponding \emph{Peirce projections}, $P_{i} (e) : E\to E_{i} (e)$, $(i=0,1,2)$ are given by $$P_2 (e) = Q(e)^2, \ P_1 (e) = 2 L(e,e) - 2 Q (e)^2, \ \hbox{ and } P_0 (e) = Id-2 L(e,e) + Q (e)^2,$$ where $Id$ is the identity map on $E$, and $Q(e)$ is the conjugate linear operator on $E$ defined by $Q(e) (x) :=\{e,x,e\}$. Clearly, $L(e,e) = P_2 (e) +\frac12 P_1 (e)$.  If $E_2 (e) $ reduces to $\mathbb{C} e$ we say that $e$ is minimal.\smallskip

The separate weak$^*$-continuity of the triple product in a JBW$^*$-triple $M$ implies that Peirce projections associated with a tripotent $e$ in $M$ are weak$^*$-continuous.\smallskip

The Peirce subspace $E_2 (e)$ enjoys an additional structure. Namely, the Jordan product and involution defined by $x\circ_e y := \J xey$ and involution $x^{*_e} := Q(e)(x)$, respectively, are well-defined on $E_2(e)$ and equip the latter space with a structure of unital JB$^*$-algebra with unit $e$.\smallskip

Let us recall some definitions. A central notion in the study of triple derivations is the relation of orthogonality. Elements $a,b$ in a JB$^*$-triple $E$ are said to be \emph{orthogonal} (written $a\perp b$) if $L(a,b) =0$. It is known that any of the following conditions is equivalent to $a\perp b$
\begin{center}
\begin{tabular}{ccc}\label{ref orthogo}
  $b\perp a;$ & $\J aab =0;$ & $a \perp r(b);$  \\
  & & \\
  $r(a) \perp r(b);$ & $E^{**}_2(r(a)) \perp E^{**}_2(r(b))$; & $r(a) \in E^{**}_0 (r(b))$; \\
  & & \\
  $a \in E^{**}_0 (r(b))$; & $b \in E^{**}_0 (r(a))$; & $E_a \perp E_b$, \\
\end{tabular}
\end{center} where $E_a$ denotes the JB$^*$-subtriple of $E$ generated by $a$
(compare \cite[Lemma 1]{BurFerGarMarPe}). \smallskip

Let $\delta: E\to E$ be a triple derivation on a JB$^*$-triple. Suppose $e$ is a tripotent in $E$. As observed in \cite{Mack} and \cite{BurPolPerBLMS14}, the identity $\delta(e) = \delta\{e,e,e\} = 2\{\delta(e),e,e\} +\{e,\delta(e),e\}$ shows that \begin{equation}\label{eq derivation at a tripotent} P_0 (e) \delta (e) = 0 \hbox{ and } P_2 (e) \delta(e) =-Q(e) \delta (e).
\end{equation}\smallskip

In the next result we study the behavior of a weak-local triple derivation at a tripotent.

\begin{proposition}\label{p weak-local triple derivation at a tripotent}
Let $T:E\to E$ be a weak-local triple derivation on a JB$^*$-triple, and let $e$ be a tripotent in $E$. Then the following statements hold:
\begin{enumerate}[$(a)$]\item $P_0 (e) T(e) = 0$;
\item $P_2 (e) T(e) = -Q(e) T(e)$;
\item $T(e) = 2\{T(e),e,e\} +\{e,T(e),e\}.$
\end{enumerate}
\end{proposition}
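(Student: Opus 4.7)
The plan is to deduce each claim from Lemma \ref{l projections} combined with the identity \eqref{eq derivation at a tripotent}, which says that every triple derivation $\delta$ satisfies $P_0(e)\delta(e)=0$ and $P_2(e)\delta(e)=-Q(e)\delta(e)$. For any fixed $\phi\in E^*$, the weak-local hypothesis applied at $a=e$ yields a triple derivation $\delta_{e,\phi}$ with $\phi T(e)=\phi\delta_{e,\phi}(e)$; the task is therefore to exhibit, for each of the relevant projections, sufficiently many $\phi$ for which the right-hand side vanishes.

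For (a), I apply Lemma \ref{l projections} to the complex-linear Peirce projection $P=P_0(e)$. Given $\phi\in E^*$ with $\phi P_0(e)=\phi$, the weak-local property furnishes $\delta_{e,\phi}$, and \eqref{eq derivation at a tripotent} applied to $\delta_{e,\phi}$ gives
\[
\phi(T(e))=\phi(\delta_{e,\phi}(e))=\phi P_0(e)\delta_{e,\phi}(e)=0.
\]
Lemma \ref{l projections} then forces $P_0(e)T(e)=0$.

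For (b), the natural candidate operator is $R:=\tfrac12\bigl(P_2(e)+Q(e)\bigr)$, and the identity to be proved is equivalent to $R(T(e))=0$. The main subtlety is that $Q(e)$ is conjugate linear, so $R$ is only $\mathbb{R}$-linear; however, the Peirce relations $Q(e)^2=P_2(e)$ and $P_2(e)Q(e)=Q(e)=Q(e)P_2(e)$ show that $R$ is a real-linear projection on $E$ viewed as a real Banach space. To apply Lemma \ref{l projections} in this real setting, I take any continuous real-linear $\psi:E\to\mathbb R$ with $\psi\circ R=\psi$ and lift it to the unique $\phi\in E^*$ with $\psi=\operatorname{Re}\phi$, i.e.\ $\phi(x)=\psi(x)-i\psi(ix)$. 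The weak-local property supplies $\delta_{e,\phi}$ with $\phi T(e)=\phi\delta_{e,\phi}(e)$, hence also $\psi T(e)=\psi\delta_{e,\phi}(e)$; and \eqref{eq derivation at a tripotent} applied to $\delta_{e,\phi}$ yields $R(\delta_{e,\phi}(e))=0$, so $\psi(\delta_{e,\phi}(e))=\psi R(\delta_{e,\phi}(e))=0$. Lemma \ref{l projections} then gives $R(T(e))=0$. I expect this interplay between the complex-linear formulation of the weak-local hypothesis and the unavoidably real-linear character of $R$ to be the only genuine obstacle in the proof.

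Statement (c) is then a Peirce bookkeeping. Using $2\{T(e),e,e\}=2L(e,e)T(e)=2P_2(e)T(e)+P_1(e)T(e)$, $\{e,T(e),e\}=Q(e)T(e)$, and the Peirce decomposition $T(e)=P_0(e)T(e)+P_1(e)T(e)+P_2(e)T(e)$, the identity $T(e)=2\{T(e),e,e\}+\{e,T(e),e\}$ reduces to $P_0(e)T(e)=0$ together with $P_2(e)T(e)+Q(e)T(e)=0$, which are precisely (a) and (b).
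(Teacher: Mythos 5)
Your proof is correct and takes essentially the same route as the paper: part (a) is word-for-word the paper's argument, and for (b) the paper uses exactly your operator $\mathcal{Q}=\tfrac12\bigl(P_2(e)+Q(e)\bigr)$ together with Lemma \ref{l projections} and \eqref{eq derivation at a tripotent}, then derives (c) by the same Peirce bookkeeping. Your additional step of passing to real-linear functionals $\psi=\operatorname{Re}\phi$ is a reasonable clarification of how the real-linear projection lemma is to be applied (the paper simply picks $\phi\in E^*$ with $\phi=\phi\mathcal{Q}$), but it does not constitute a different approach.
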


\begin{proof} $(a)$ Let $\phi$ be an element in $E^*$ satisfying $\phi = \phi P_0 (e)$. By our assumptions, $$\phi T(e)= \phi \delta_{e,\phi} (e) = \phi P_0 (e) \delta_{e,\phi} (e) = 0,$$ where in the last equality we apply \eqref{eq derivation at a tripotent}. It follows from Lemma \ref{l projections} that $P_0 (e) T(e) =0$.\smallskip

$(b)$ Let $\mathcal{Q}= \frac12 (P_2(e) +Q(e))$. Clearly, $\mathcal{Q}$ is a contractive real linear projection on $E$. Pick a functional $\phi\in E^*$ with $\phi = \phi \mathcal{Q}$. The hypothesis combined with \eqref{eq derivation at a tripotent} show that $$\phi T(e) = \phi \delta_{e,\phi} (e) = \phi \mathcal{Q} \delta_{e,\phi} (e) = 0.$$ Lemma \ref{l projections} gives $\mathcal{Q} T(e) = 0$, which proves the desired statement. \smallskip

$(c)$ Applying $(a)$ and $(b)$ and the Peirce decomposition we get $$T(e) = P_0 (e) T(e) + P_1 (e) T(e) + P_2 (e) T(e) = P_1 (e) T(e) + P_2 (e) T(e) $$ $$=2 L(e,e) T(e) - P_2 (e) T(e) = 2\{e,e,T(e)\} + Q(e) T(e) = 2\{e,e,T(e)\} + \{e, T(e),e\}.$$
\end{proof}

\begin{remark}\label{remark prop 1 for weak*-local} The conclusions in the above Proposition \ref{p weak-local triple derivation at a tripotent} remain true when we assume that $T:E\to E$ is a weak$^*$-local triple derivation on a JBW$^*$-triple.
\end{remark}

A closer look at the arguments given in the first part of \cite[Proof of Theorem 2.4]{BurPolPerBLMS14} allows us to realize that the next result follows from the same arguments. We shall include here an sketch of the proof for completeness reasons.

\begin{proposition}\label{p weak local for algebraic} Let $T: E\to E$ be a linear mapping on a JB$^*$-triple. Suppose that $T(e) = 2\{e,e,T(e)\} + \{e, T(e),e\}$ for every tripotent $e$ in $E$. Then $T\{a,a,a\} = 2\{T(a),a,a\} + \{a, T(a),a\},$ whenever $a$ writes as a finite linear combination of mutually orthogonal tripotents in $E$.
\end{proposition}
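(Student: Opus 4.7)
The plan is to exploit the fact that the target identity is a bihomogeneous polynomial identity in the scalars $\lambda_i$, and that the hypothesis applied at a sufficiently rich family of tripotents supplies enough vanishing data to recover all of its coefficients.

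Write $a = \sum_{i=1}^{n} \lambda_i e_i$ with $e_1,\ldots,e_n$ mutually orthogonal tripotents. The mutual orthogonality gives $L(e_i,e_j)=0$ for $i\neq j$ together with $e_k\in E_0(e_i)$ for $k\neq i$, whence $\{e_i,e_j,e_k\}=\delta_{ij}\delta_{jk}\,e_i$. Consequently $\{a,a,a\}=\sum_i |\lambda_i|^2\lambda_i\,e_i$, so
\[
T\{a,a,a\} \;=\; \sum_i |\lambda_i|^2 \lambda_i\,T(e_i),
\]
while trilinearity of the triple product yields
\[
2\{T(a),a,a\}+\{a,T(a),a\} \;=\; \sum_{i,j,k} \lambda_i\,\bar\lambda_j\,\lambda_k \bigl(2\{T(e_i),e_j,e_k\}+\{e_i,T(e_j),e_k\}\bigr).
\]
Let $F(\lambda,\bar\lambda)$ denote the difference. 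It is an $E$-valued polynomial in $(\lambda_i,\bar\lambda_i)_{i=1}^{n}$, bihomogeneous of bidegree $(2,1)$, and the goal is $F\equiv 0$.

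The key observation is that, for every $S\subseteq\{1,\ldots,n\}$ and every family $\{\mu_i\}_{i\in S}$ of unimodular scalars, the element $e_S:=\sum_{i\in S}\mu_i e_i$ is itself a tripotent, since $\{e_S,e_S,e_S\}=\sum_{i\in S}|\mu_i|^2\mu_i e_i=e_S$. The hypothesis applied to $e_S$ reads exactly as $F(\mu_S,\bar\mu_S)=0$, where $\mu_S$ has entry $\mu_i$ in slot $i\in S$ and $0$ elsewhere. Hence $F$ vanishes on the partial tori $\{\lambda\in\mathbb{C}^n : |\lambda_i|=1 \text{ for } i\in S,\ \lambda_i=0 \text{ for } i\notin S\}$; viewed on such a torus $F$ becomes a Laurent polynomial in the $\mu_i$, and its vanishing forces every Laurent coefficient to be zero.

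One then extracts the individual coefficients $C_{\alpha,\beta}$ of $F$ by letting $|S|$ grow. The case $|S|=1$ recovers the hypothesis at $e_r$ and gives $C_{(2e_r,\,e_r)}=0$. The case $|S|=2$ with $S=\{r,s\}$, combined with the previous step, yields $C_{(2e_r,\,e_s)}=0$ and $C_{(e_r+e_s,\,e_s)}=0$ together with their $r\leftrightarrow s$ partners. The case $|S|=3$ with pairwise distinct $r,s,u$ produces $C_{(e_r+e_s,\,e_u)}=0$. As these exhaust the bidegree-$(2,1)$ monomials in $(\lambda,\bar\lambda)$, $F\equiv 0$ as a polynomial and the desired identity holds for every $\lambda\in\mathbb{C}^n$. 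The only delicate point is this final bookkeeping step: one must check, at each value of $|S|$, that the corresponding Laurent monomials on the torus are paired with just one new unknown coefficient modulo the ones already proved to vanish. This is routine because the bidegree is only $(2,1)$ and no $|S|>3$ is ever required.
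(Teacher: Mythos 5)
Your proof is correct, but it follows a genuinely different route from the paper's. The paper applies the hypothesis only at the single tripotents $e_i$ and at the tripotents $e_i\pm e_j$, and obtains the crucial vanishing $\{e_i,T(e_j),e_k\}=0$ (for $i,k\neq j$) not from further test tripotents but from Peirce arithmetic: the hypothesis at $e_j$ forces $P_0(e_j)T(e_j)=0$, and then the Peirce rule $\{E_0(e_j),E_1(e_j)\oplus E_2(e_j),E_0(e_j)\}=\{0\}$ kills the cross terms; the remaining relation $\{e_j,e_j,T(e_i)\}+\{e_i,T(e_j),e_j\}=0$ comes from expanding the identity at $e_i\pm e_j$. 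You instead treat the defect $F$ as a bidegree-$(2,1)$ polynomial in $(\lambda,\bar\lambda)$, feed in the richer family of tripotents $\sum_{i\in S}\mu_i e_i$ with unimodular $\mu_i$ and $|S|\le 3$, and extract all coefficients by Laurent/Fourier expansion on partial tori; your bookkeeping is right (at each stage the relevant exponent vector $e_i+e_k-e_j$ isolates exactly one coefficient not already known to vanish, and no monomial involves more than three indices), and the vector-valued coefficient extraction is legitimate by composing with functionals and using Hahn--Banach, a point you could state explicitly. What each approach buys: the paper's argument is shorter at the cost of invoking the Peirce structure theory of the $E_0$-space, while yours is a purely formal polarization-over-the-torus argument that never uses Peirce arithmetic and makes transparent both why unimodular coefficients are the natural test family and why only triples of orthogonal tripotents are ever needed; in effect your $|S|=3$ step re-proves the paper's identity \eqref{eq 1 prop algebraic} and your $|S|=2$ step re-proves \eqref{eq 3 proposition algebraic} from the hypothesis alone.
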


\begin{proof} Let $e_1,$ $e_2,\ldots, e_n$ be mutually orthogonal tripotents in $E$. Proposition \ref{p weak-local triple derivation at a tripotent}$(a)$ implies that $P_0 (e_j) T (e_j)=0.$ The condition $e_i,e_k\perp e_j$ implies that $e_i,e_k\in E_0 (e_j)$ and hence, by Peirce arithmetic, we have \begin{equation}\label{eq 1 prop algebraic} \{ {e_i},{T(e_j)},{e_k} \}= 0.
\end{equation} By hypothesis, \begin{equation}\label{eq 2 prop algebraic} T \J {e_i }{e_i}{e_i} = 2 \J {e_i }{e_i }{T (e_i)} + \J {e_i }{T(e_i)}{e_i},
\end{equation} for every $i$. Furthermore, for each $i\neq j$ in $\{1,\ldots, n\}$, since $e_i\perp e_j$, and hence $e_i\pm e_j$ is a tripotent, it follows from the hypothesis that $$ T \J {e_i \pm e_j}{e_i \pm e_j}{e_i \pm e_j} = 2 \J {e_i \pm e_j}{e_i \pm e_j}{T (e_i \pm e_j)}$$ $$ + \J {e_i \pm e_j}{T(e_i \pm e_j)}{e_i \pm e_j}.$$ Expanding the above identity and having in mind \eqref{eq 2 prop algebraic} and $e_i\perp e_j$, we deduce that $$\pm 2 \J {e_i}{e_i}{T (e_j)} + 2 \J {e_j}{e_j}{T (e_i)} \pm  \J {e_i }{T(e_j)}{e_i )} + \J {e_j}{T(e_i)}{ e_j} $$ $$\pm 2 \J {e_i}{T(e_i)}{ e_j} + 2 \J {e_i}{T( e_j)}{e_j} =0, $$ which shows that $$+ 4 \J {e_j}{e_j}{T (e_i)} + 2 \J {e_j}{T(e_i)}{ e_j}+ 4 \J {e_i}{T( e_j)}{e_j} =0.$$ The identity proved in \eqref{eq 1 prop algebraic} assures that \begin{equation}\label{eq 3 proposition algebraic} \J {e_j}{e_j}{T (e_i)} + \J {e_i}{T( e_j)}{e_j} =0.
\end{equation}

Suppose $a$ is an element in $E$ which can be written in the form $\displaystyle a=\sum_{i=1}^{n} \lambda_i e_i,$ where $e_1 , \ldots, e_n$ are mutually orthogonal tripotents. By the linearity of $T$, expanding the expressions $$\displaystyle T\{a,a,a\} = \sum_{i=1}^{n} \lambda_i^{3} T (\J {e_i}{e_i}{e_i}),\ \displaystyle 2 \J {T(a)}aa =2 \sum_{i,j=1}^{n} \lambda_i^2 \lambda_j \J {e_i}{e_i}{T (e_j)},$$ and $\displaystyle \J {a}{T(a)}{a} = \J {\sum_{i=1}^{n} \lambda_i e_i}{\sum_{j=1}^{n} \lambda_j T(e_j)}{\sum_{k=1}^{n} \lambda_k e_k}$, it can be easily checked, applying \eqref{eq 1 prop algebraic}, \eqref{eq 2 prop algebraic} and \eqref{eq 3 proposition algebraic}, that $$T \J aaa = 2 \J {T(a)}aa + \J a{T(a)}a.$$
\end{proof}

Unfortunately, there exist examples of JB$^*$-triples lacking of tripotents. Actually, every C$^*$-algebra lacking of projections is an example of the previous statement. In a JBW$^*$-triple $M$, every element in $M$ can be approximated in norm by finite linear combinations of mutually orthogonal tripotents in $M$ (see \cite[Lemma 3.11]{Ho87}). In a weakly compact JB$^*$-triple in the sense of \cite{BuChu}, every element can be approximated in norm by finite linear combinations of mutually orthogonal minimal tripotents (see \cite[Remark 4.6]{BuChu}).\smallskip

We can prove now that the aforementioned Mackey's theorem remains valid for weak-local triple derivations on JBW$^*$-triples and on weakly compact JB$^*$-triples.

\begin{corollary}\label{c weak local when algebraic are dense} Let $E$ be a JB$^*$-triple satisfying that every element in $E$ can be approximated in norm by finite linear combinations of mutually orthogonal tripotents in $E$. 
Let $T: E\to E$ be a bounded linear operator satisfying $$T(e) = 2\{e,e,T(e)\} + \{e, T(e),e\},$$ for every tripotent $e$ in $E$. Then $T$ is a triple derivation. Consequently, every weak-local triple derivation on $E$ is a triple derivation.
\end{corollary}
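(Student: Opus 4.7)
My plan is to first obtain the ``cubic'' triple identity $T\{a,a,a\}=2\{T(a),a,a\}+\{a,T(a),a\}$ on all of $E$ and then recover the full Leibniz rule from it by polarization. The hypothesis on $T$ at tripotents is precisely what Proposition \ref{p weak local for algebraic} requires, so that proposition immediately yields the cubic identity for every element $a$ of the form $\sum_{i=1}^n \lambda_i e_i$ with $e_1,\ldots,e_n$ mutually orthogonal tripotents in $E$. The standing density assumption on $E$ says exactly that such elements are norm-dense; since $T$ is bounded and the triple product is jointly norm-continuous, the cubic identity extends to every $a\in E$.

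To polarize, I would introduce
\[
F(x,y,z):= T\{x,y,z\}-\{T(x),y,z\}-\{x,T(y),z\}-\{x,y,T(z)\},
\]
which is complex linear in $x$ and $z$, conjugate linear in $y$, symmetric in $x,z$, and, by the previous step, satisfies $F(a,a,a)=0$ for all $a\in E$. Replacing $a$ by $a+tb$ with $t\in\mathbb{R}$ and reading off the coefficient of $t$ in $F(a+tb,a+tb,a+tb)=0$ gives, using the outer symmetry of $F$, the relation $2F(a,a,b)+F(a,b,a)=0$. Substituting $a\mapsto ia$ into this same relation and tracking the factors $i$, $\bar i$ produced by the complex/conjugate linearity yields the companion identity $2F(a,a,b)-F(a,b,a)=0$, whence $F(a,a,b)=0=F(a,b,a)$ for all $a,b\in E$. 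A final real polarization $a\mapsto a_1+a_2$ in $F(a,b,a)=0$, combined once more with outer symmetry, then produces $F(a_1,b,a_2)=0$ for all $a_1,b,a_2\in E$, which is the full triple Leibniz rule.

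The ``Consequently'' clause is immediate: if $T$ is a weak-local triple derivation on $E$, then Theorem \ref{t continuity of weak-local triple derivations} guarantees that $T$ is continuous, Proposition \ref{p weak-local triple derivation at a tripotent}$(c)$ together with the outer symmetry $\{T(e),e,e\}=\{e,e,T(e)\}$ provides the identity $T(e)=2\{e,e,T(e)\}+\{e,T(e),e\}$ at every tripotent, and the first assertion applies. I expect the main obstacle to be the polarization step: the conjugate linearity in the middle slot prevents naive real polarization from isolating the two unknown trilinear pieces $F(a,a,b)$ and $F(a,b,a)$ simultaneously, which is why the complex substitution $a\mapsto ia$ (producing opposite signs on the two pieces) is essential.
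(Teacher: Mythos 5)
Your proof is correct and takes essentially the same approach as the paper: Proposition \ref{p weak local for algebraic} plus norm-density, boundedness of $T$ and joint continuity of the triple product give the cubic identity on all of $E$, and your explicit polarization of $F$ (coefficient extraction in $t$ plus the substitution $a\mapsto ia$) is just a hands-on version of the standard polarization argument via \eqref{polarization fla} that the paper cites from Mackey's proof. The ``Consequently'' clause is handled exactly as in the paper, using Theorem \ref{t continuity of weak-local triple derivations} and Proposition \ref{p weak-local triple derivation at a tripotent}$(c)$.
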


\begin{proof} By Proposition \ref{p weak local for algebraic} we know that $T\{a,a,a\} = 2\{T(a),a,a\} + \{a, T(a),a\},$ whenever $a$ writes as a finite linear combination of mutually orthogonal tripotents in $E$. Since the latter elements are norm-dense in $E$, $T$ is continuous, and the triple product of $E$ is jointly norm continuous, we obtain that $T\{b,b,b\} = 2\{T(b),b,b\} + \{b, T(b),b\},$ for every $b\in E$. A standard polarisation argument, via the formula: \begin{equation}
\label{polarization fla} \{ x,y,z\} = 8^{-1} \sum_{k=0}^{3} \sum_{j=1}^{2} (-1)^{j} i^{k} \Big(x+ i^{k} \ y + (-1)^{j} z\Big)^{[3]} \ \ \
(x,y, z\in {E}),
\end{equation} proves that $T$ is a triple derivation (see, for example, the proof of \cite[Theorem 5.11]{Mack}).\smallskip

For the last statement, let $S: E\to E$ be a weak-local triple derivation. We observe that Theorem \ref{t continuity of weak-local triple derivations} implies that $S$ is continuous.  By Proposition \ref{p weak-local triple derivation at a tripotent}$(c)$, $S\{e,e,e\} = 2 \{S(e),e,e\}+\{e, S(e),e\}$, for every tripotent $e\in E$. The desired conclusion follows from the first part of this corollary.
\end{proof}

Combining Proposition \ref{p weak local for algebraic} with the appropriate weak$^*$-local version of Proposition \ref{p weak-local triple derivation at a tripotent} given in Remark \ref{remark prop 1 for weak*-local} we obtain:

\begin{corollary}\label{c continuous weak*-local triple deriv} Every continuous weak$^*$-local triple derivation on a JBW$^*$-triple is a triple derivation.
\end{corollary}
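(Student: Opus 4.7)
The plan is to follow the template established by Corollary~\ref{c weak local when algebraic are dense}, making the obvious substitutions for the weak$^*$-local setting. Let $M$ be a JBW$^*$-triple and let $T\colon M\to M$ be a continuous weak$^*$-local triple derivation. By Remark~\ref{remark prop 1 for weak*-local}, the conclusions of Proposition~\ref{p weak-local triple derivation at a tripotent} remain valid in this setting, so in particular
\[
T(e) = 2\{e,e,T(e)\} + \{e,T(e),e\}
\]
for every tripotent $e\in M$.

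The next step is to invoke Proposition~\ref{p weak local for algebraic}, whose hypothesis is exactly the cubic identity at tripotents established above, and whose statement and proof are purely algebraic (they make no reference to the local, weak-local or weak$^*$-local character of $T$). This yields
\[
T\{a,a,a\} = 2\{T(a),a,a\} + \{a,T(a),a\}
\]
for every $a\in M$ that can be written as a finite linear combination of mutually orthogonal tripotents. Since $M$ is a JBW$^*$-triple, such elements form a norm-dense subset of $M$ by \cite[Lemma 3.11]{Ho87}. Combining the assumed continuity of $T$ with the joint norm continuity of the triple product, the cubic identity extends to every $b\in M$, and polarisation via formula~\eqref{polarization fla} then delivers the full Leibniz rule~\eqref{eq triple der} for $T$, exactly as at the end of the proof of Corollary~\ref{c weak local when algebraic are dense}.

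The only point that deserves verification is that no hidden appeal to the weak-local hypothesis enters the proof of Proposition~\ref{p weak local for algebraic}; inspection confirms that it uses only the cubic identity at tripotents, which is precisely what Remark~\ref{remark prop 1 for weak*-local} supplies in the weak$^*$-local setting. The genuine obstacle for the weak$^*$-local case on JBW$^*$-triples is the missing automatic continuity recorded in Problem~\ref{problem continuity of weak*-local}, but since continuity is part of the hypothesis here, there is no further work to do and the argument closes.
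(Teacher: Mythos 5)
Your argument is correct and is essentially the paper's own proof: the paper obtains the corollary precisely by combining Remark \ref{remark prop 1 for weak*-local} (the cubic identity at tripotents) with Proposition \ref{p weak local for algebraic}, norm-density of finite linear combinations of mutually orthogonal tripotents in a JBW$^*$-triple, the assumed continuity, and polarisation, exactly as in the first part of Corollary \ref{c weak local when algebraic are dense}. Your remark that Proposition \ref{p weak local for algebraic} only uses the cubic identity (which indeed yields $P_0(e)T(e)=0$ and $P_2(e)T(e)=-Q(e)T(e)$) is the right sanity check, and nothing further is needed.
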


As we have previously mentioned, a general JB$^*$-triple $E$ might not contain a single tripotent. For these reasons, as in the study of local triple derivations, we must deal with tripotents in $E^{**}$ which are achievable by elements in $E$.\smallskip

We recall a series of definitions and notions on range and compact tripotents taken from \cite{EdRu96,FerPe06,FerPe07}. Let $a$ be a norm-one element in a JB$^*$-triple $E$. We set $a^{[1]} = a$ and $a^{[2 n +1]} := \J a{a^{[2n-1]}}a$ $(\forall n\in \mathbb{N})$. It is also known that the sequence $(a^{[2n -1]})$ converges in the
weak$^*$ topology of $E^{**}$ to a unique tripotent $s(a)$ in $E^{**}$. The tripotent $s(a)$ is called the \emph{support} \emph{tripotent} of $a$ in $E^{**}$. A tripotent $e$ in $E^{**}$ is said to be \emph{compact-$G_{\delta}$} (relative to $E$) if there exists a norm-one element $a$ in $E$ such that $e$ coincides with $s(a)$. A tripotent $e$ in $E^{**}$ is said to be \emph{compact} (relative to $E$) if there exists a decreasing net $(e_{\lambda})$ of tripotents in $E^{**}$ which are compact-$G_{\delta}$ with infimum $e$, or if $e$ is zero (compare \cite{EdRu96}).\smallskip

The JB$^*$-subtriple $E_a$ generated by a single element $a$ in a JB$^*$-triple $E$ can be identified isometrically with a commutative C$^*$-algebra of form $C_0(L)$ where $L\subseteq [0,\|a\|]$ with $L\cup\{0\}$ compact, in such a way that the element $a$ is associated with a positive generating element in $C_0(L)$ (compare \cite[\S 1, Corollary 1.15]{Ka83}). A continuous triple functional calculus at the element $a$ can be appropriately defined. Consequently, for each natural $n$, there exists (a unique) element $a^{[{1}/({2n-1})]}$ in $E_a$ satisfying $(a^{[{1}/({2n-1})]})^{[2n-1]} = a.$ The sequence $(a^{[{1}/({2n-1})]})$ converges in the weak$^*$ topology of $E^{**}$ to a tripotent (called the \emph{range tripotent} of $a$) which is denoted by $r(a)$. The tripotent $r(a)$ can be also defined as the smallest tripotent $e$ in $E^{**}$ satisfying that $a$ is positive in the JBW$^*$-algebra $E^{**}_{2} (e)$. It is also known that $ s(a) \leq a \leq r(a)$ in $E^{**}_2 (r(a))$.\smallskip

We shall state next a variant of the first identity in \eqref{eq derivation at a tripotent}. Let $\delta: E\to E$ be a triple derivation on a JB$^*$-triple. Let $c$ be an element in $E$, and let $e$ be a tripotent in $E^{**}$ such that $c\in E_{2}^{**} (e)$. We claim that \begin{equation}\label{eq triple derivation at an element and ranges 1} P_0 (e) \delta(c) =0.
\end{equation} Indeed, by the continuous triple functional calculus (compare \cite[\S 1, Corollary 1.15]{Ka83}), there exists $z\in E\cap E_{2}^{**} (e)$ such that $\{z,z,z\} = c$. Since, $\delta (c) = 2\{\delta(z),z,z\} + \{z,\delta(z),z\}$, it follows from Peirce rules that $$\{z,\delta(z),z\}\in E\cap E^{**}_2 (e), \hbox{ and } \{\delta(z),z,z\} \in E\cap \left(E^{**}_2 (e) \oplus E^{**}_1 (e) \right),$$ and hence   $\delta (c) \in E\cap \left(E^{**}_2 (e) \oplus E^{**}_1 (e) \right),$ witnessing the desired conclusion in \eqref{eq triple derivation at an element and ranges 1}.

\begin{lemma}\label{l weak-local triple deriv range} Let $T: E\to E$ be a weak-local triple derivation on a JB$^*$-triple. Let $a$ be an element in $E$, and let $e$ be a tripotent in $E^{**}$ such that $a\in E_2^{**} (e)$. Then $P_0 (e) T(a) =0$.
\end{lemma}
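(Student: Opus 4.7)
My plan is to adapt the argument of Proposition~\ref{p weak-local triple derivation at a tripotent}$(a)$, but to account for the fact that the tripotent $e$ now lives in $E^{**}$ rather than in $E$, so that the relevant Peirce zero-projection $P_0(e)$ acts on the bidual. The crucial auxiliary fact, to be used as a black box, is the identity \eqref{eq triple derivation at an element and ranges 1}: for any triple derivation $\delta$ on $E$ and any $c\in E$ with $c\in E_2^{**}(e)$, one has $P_0(e)\delta(c)=0$.

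The first step is to exploit the weak$^*$-continuity of $P_0(e):E^{**}\to E^{**}$ (recalled in the preliminaries) to transport an arbitrary $\phi\in E^*$ to a companion functional $\psi\in E^*$ characterised by $\langle \xi,\psi\rangle=\langle P_0(e)(\xi),\phi\rangle$ for every $\xi\in E^{**}$. The weak$^*$-continuity of $P_0(e)$ is what guarantees that such a $\psi$ indeed belongs to $E^*$ (and not merely to $E^{***}$), which is precisely what allows us to plug $\psi$ into the weak-local hypothesis for $T$.

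With $\psi$ in hand, the second step is nearly immediate: the weak-local hypothesis applied to the pair $(a,\psi)$ yields a triple derivation $\delta_{a,\psi}:E\to E$ with $\psi(T(a))=\psi(\delta_{a,\psi}(a))$, and \eqref{eq triple derivation at an element and ranges 1} applied to $\delta_{a,\psi}$ and $c=a$ gives $P_0(e)\delta_{a,\psi}(a)=0$. Tracing the chain of equalities, one obtains $\phi(P_0(e)T(a))=\psi(T(a))=\psi(\delta_{a,\psi}(a))=\phi(P_0(e)\delta_{a,\psi}(a))=0$. Since $\phi\in E^*$ was arbitrary and $E^*$ separates points of $E^{**}$, this forces $P_0(e)T(a)=0$, as desired.

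The main obstacle, if any, is the $E$ vs.\ $E^{**}$ mismatch: Lemma~\ref{l projections}, the natural vehicle for this kind of argument, assumes that the projection and the test element live on the same Banach space, whereas here $T(a)\in E$ while $P_0(e)$ is defined on $E^{**}$. Weak$^*$-continuity of Peirce projections is precisely what resolves this issue, letting us manufacture the companion functional $\psi\in E^*$ to which the weak-local hypothesis applies; with that bridge in place, the remaining manipulation is purely formal.
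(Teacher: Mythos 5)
Your argument is correct and is essentially the paper's own proof: both rest on the weak-local hypothesis together with identity \eqref{eq triple derivation at an element and ranges 1}, tested against functionals of the form $\phi P_0(e)$. Your explicit construction of the companion functional $\psi=\phi P_0(e)$ (legitimate by weak$^*$-continuity of the Peirce projections) is just an unwound version of the paper's appeal to Lemma \ref{l projections} applied to functionals $\phi\in E^*$ with $\phi=\phi P_0(e)$, so the two proofs coincide in substance.
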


\begin{proof} Let $\phi\in E^{*}$ be a functional satisfying $\phi = \phi P_0 (e)$. It follows from the hypothesis and \eqref{eq triple derivation at an element and ranges 1} that $$\phi T(a) = \phi \delta_{a,\phi} (a) = \phi P_0 (e) \delta_{a,\phi} (a) = 0.$$ Lemma \ref{l projections} assures that $P_0 (e) T(a) =0$.
\end{proof}

We can derive now a weak-local version of \cite[Lemma 4]{BurFerGarPe2012} with a simple argument.

\begin{lemma}\label{l 4 BurFerGarPe} Let $T: E \to E$ be a weak-local triple derivation on a JB$^*$-triple. Then $\{a,{T(b)},c\}=0$ for every $a,b,c$ in $E$ with $a, c\perp b$.
\end{lemma}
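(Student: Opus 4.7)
The plan is to reduce the problem to Peirce arithmetic with respect to the range tripotent $e = r(b) \in E^{**}$ of $b$, using Lemma \ref{l weak-local triple deriv range} to control $T(b)$.

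First I would set $e := r(b)$. Since the range tripotent satisfies $b \in E^{**}_2(e)$ (indeed, $b$ is positive in the JBW$^*$-algebra $E^{**}_2(e)$, as recalled just before Lemma \ref{l weak-local triple deriv range}), Lemma \ref{l weak-local triple deriv range} applies and yields
\[
P_0(e)\, T(b) \;=\; 0,
\]
so that $T(b) \in E^{**}_1(e) \oplus E^{**}_2(e)$.

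Next I would use the characterization of orthogonality listed in the table on page~\pageref{ref orthogo}: the conditions $a\perp b$ and $c\perp b$ are equivalent to $a, c \in E^{**}_0(r(b)) = E^{**}_0(e)$. Now I decompose
\[
\{a,T(b),c\} \;=\; \{a,\, P_1(e)T(b),\, c\} \;+\; \{a,\, P_2(e)T(b),\, c\},
\]
where the computation is performed in $E^{**}$ (which is legitimate since $E$ is a JB$^*$-subtriple of $E^{**}$). The first summand vanishes by Peirce arithmetic, because $0-1+0 = -1 \notin \{0,1,2\}$, hence $\{E^{**}_0(e),E^{**}_1(e),E^{**}_0(e)\} = \{0\}$. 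The second summand vanishes by the special Peirce rule $\{E^{**}_0(e),E^{**}_2(e),E^{**}\} = \{0\}$ recalled in the introduction.

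Combining the two vanishings gives $\{a,T(b),c\} = 0$ in $E^{**}$, and hence in $E$. The proof really rests entirely on the previous lemma plus the standard Peirce machinery, so I do not anticipate a technical obstacle; the only point deserving care is to verify that $b \in E^{**}_2(r(b))$ so that Lemma \ref{l weak-local triple deriv range} is applicable, and this is precisely the defining property of the range tripotent.
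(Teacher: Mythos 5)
Your proposal is correct and follows essentially the same route as the paper: apply Lemma \ref{l weak-local triple deriv range} with $e=r(b)$ to get $P_0(e)T(b)=0$, note that $a,c\perp b$ forces $a,c\in E^{**}_0(r(b))$, and conclude by Peirce arithmetic. You merely spell out the two Peirce rules ($\{E^{**}_0,E^{**}_1,E^{**}_0\}=\{0\}$ and $\{E^{**}_0,E^{**}_2,E^{**}\}=\{0\}$) that the paper leaves implicit, which is a fine addition but not a different argument.
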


\begin{proof} Let $a,b,c$ in $E$ with $a,c\perp b$. Applying Lemma \ref{l weak-local triple deriv range} with $e= r(b)$ we deduce that $P_0 (r(b)) T(b) =0$. It follows from the hypothesis that $a,c\in E_0^{**}(r(b))$. So, we can easily deduce, via Peirce arithmetic, that $\{a,{T(b)},c\}=0$.
\end{proof}

A detailed inspection to the proof of \cite[Proposition 2.2]{BurPolPerBLMS14} is enough to ensure that the conclusions in the first two statements of the just quoted Proposition 2.2 in \cite{BurPolPerBLMS14} remain valid when $T:E\to E$ is a bounded linear operator on a JB$^*$-triple satisfying $\{a,T(b),c\}=0$, for every $a,b,c\in E$ with $a,c\perp b$. We therefore have:

\begin{proposition}\label{p local triple derivations on compact tripotents via orthognal forms}\cite[proof of Proposition 2.2]{BurPolPerBLMS14} Let $T : E \to E$ be a bounded linear operator on a JB$^*$-triple satisfying $\{a,T(b),c\}=0$, for every $a,b,c\in E$ with $a,c\perp b$. Suppose $e$ is a compact tripotent in $E^{**}.$ Then the following statements hold: \begin{enumerate}[$(a)$]
\item $P_0 (e) T^{**}(e) =0$;
\item If $a$ is a norm-one element in $E$ whose support tripotent is $e$ {\rm(}that is, $e$ is a compact-$G_{\delta}$ tripotent{\rm)}, then $Q (e) T(a) = Q (e) T^{**} (e).$
\end{enumerate}
\end{proposition}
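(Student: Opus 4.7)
The plan is to adapt the proof of \cite[Proposition 2.2]{BurPolPerBLMS14} to the present, mildly weaker hypothesis (the orthogonality identity on $T$, which is exactly what Lemma \ref{l 4 BurFerGarPe} extracts from a weak-local triple derivation). The two recurring tools will be: (i) the weak$^*$ approximation $s(a)=w^*\text{-}\lim_n a^{[2n-1]}$, together with $T^{**}$ being weak$^*$-continuous (consequence of $T$ being bounded) and the Peirce projections being weak$^*$-continuous on $E^{**}$; and (ii) separate weak$^*$-continuity of the triple product, which extends the orthogonality hypothesis from $E$ to $E^{**}$ in the side variables.

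For part $(a)$, I would first handle the compact-$G_\delta$ case $e=s(a)$ with $\|a\|=1$. Weak$^*$ continuity of $T^{**}$ and of $P_0(e)$ give
\[
P_0(e)\,T^{**}(e)\;=\;w^*\text{-}\lim_n P_0(e)\,T\!\left(a^{[2n-1]}\right),
\]
so it is enough to control each $P_0(e)\,T(b)$, with $b:=a^{[2n-1]}\in E$, which satisfies $r(b)=r(a)$ and $s(b)=s(a)=e$. The orthogonality hypothesis yields $\{x,T(b),y\}=0$ for all $x,y\in E\cap E_0^{**}(r(b))$; extending by separate weak$^*$-continuity of the triple product to $x,y\in E_0^{**}(r(b))$, and combining with Peirce arithmetic inside $E^{**}$ and Lemma \ref{l projections}, the Peirce-$0$ component of $T(b)$ relative to $e$ must vanish. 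A general compact tripotent $e$ is the weak$^*$ limit of a decreasing net $(e_\lambda)$ of compact-$G_\delta$ tripotents, so the statement is then transferred from the $e_\lambda$'s to $e$ by weak$^*$ continuity of $T^{**}$ and the limit behaviour of the Peirce projections.

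For part $(b)$, with $e=s(a)$ compact-$G_\delta$, I would write
\[
Q(e)\,T(a)-Q(e)\,T^{**}(e)\;=\;-\,Q(e)\,T^{**}(a-e),
\]
and argue that the right-hand side vanishes. Inside the commutative subtriple $E_a^{**}$ one has $P_2(e)a=e$, hence $a-e\in E_0^{**}(e)+E_1^{**}(e)$ and $Q(e)(a-e)=0$. To propagate this through $T^{**}$ I would approximate $a-e$ weak$^*$ by the elements $a-a^{[2n-1]}\in E$, each of which is orthogonal to $s(a)$ in $E^{**}$ (a direct functional-calculus computation inside $E_a^{**}$), apply the orthogonality hypothesis to these elements together with the information gathered in the proof of $(a)$, and pass to the weak$^*$ limit.

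The main obstacle is the one implicit in part $(a)$: bridging the gap between orthogonality to the range tripotent $r(b)$, which is the form in which the hypothesis directly supplies information, and the conclusion, which is phrased in terms of the strictly smaller support tripotent $e=s(b)\leq r(b)$. This is where the commutative JBW$^*$-algebra structure of $E_a^{**}\subseteq E_2^{**}(r(a))$ (in particular the rigid position of $s(a)$ relative to $r(a)$ and $a$) enters the argument, and where separate weak$^*$-continuity of the triple product is used to lift the identities from $E$ to $E^{**}$. This chain of identifications is the technical core borrowed from \cite{BurPolPerBLMS14}.
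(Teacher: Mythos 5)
Your reduction of part $(a)$ to the claim that $P_0(e)T\big(a^{[2n-1]}\big)=0$ for every $n$ is where the argument breaks: that claim is false even when $T$ is an honest triple derivation. Take $E=M_3(\mathbb{C})$, $a=\mathrm{diag}(1,\tfrac12,\tfrac14)$, so $e=s(a)=e_{11}$, and $\delta(x)=i(hx-xh)$ with $h=e_{23}+e_{32}$; then $\delta$ is a triple derivation (hence satisfies the orthogonality hypothesis), yet $P_0(e)\delta(a)$ is the lower-right $2\times2$ corner of $i[h,a]$, which is non-zero. What is true for a derivation is only $P_0(e)\delta^{**}(e)=0$; each term $P_0(e)\delta\big(a^{[2n-1]}\big)$ carries a contribution from $P_0(e)\big(a^{[2n-1]}\big)$ that need not vanish. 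Moreover the mechanism you invoke cannot give the claim: the hypothesis yields $\{x,T(b),y\}=0$ only for $x,y\perp b$, i.e.\ $x,y\in E\cap E_0^{**}(r(b))$, and for $b=a^{[2n-1]}$ the range tripotent is constantly $r(a)$, so the admissible outer variables never increase with $n$; since $E_0^{**}(r(a))$ is in general strictly smaller than $E_0^{**}(e)$ (exactly the $r(a)$ versus $s(a)$ gap you flag at the end), Peirce arithmetic plus Lemma \ref{l projections} can at best kill components relative to tripotents orthogonal to $a$, not $P_0(e)T^{**}(e)$. (There is also a density issue: $E\cap E_0^{**}(r(b))$ need not be weak$^*$ dense in $E_0^{**}(r(b))$, e.g.\ $E=C[0,1]$, $b(t)=t$.) Any working argument must use, instead of the odd powers, functional-calculus approximants $f_n(a)$ peaking at $1$, whose range tripotents \emph{decrease} to $e$, so that the orthogonal complements available in $E$ grow and eventually exhaust enough of $E_0^{**}(e)$; this is the device behind the proof you are adapting, and it is absent from your sketch.

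Part $(b)$ has the same defect: the elements $a-a^{[2n-1]}$ are orthogonal to $e$ only in $E^{**}$, whereas the hypothesis applies to triples of elements of $E$; to get $Q(e)T^{**}(a-e)=0$ (note also the sign: $Q(e)T(a)-Q(e)T^{**}(e)=+\,Q(e)T^{**}(a-e)$) along your lines you would need elements of $E$ orthogonal to $a-a^{[2n-1]}$ converging weak$^*$ to $e$, and in general there are none (e.g.\ $E=C(L)$ with $1$ a non-isolated point of $L$). The passage from compact-$G_\delta$ to arbitrary compact tripotents also needs more than ``the limit behaviour of the Peirce projections'', since the triple product is not jointly weak$^*$ continuous. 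In short, the concrete steps you propose would fail, and the genuinely delicate point — converting orthogonality information phrased through range tripotents into conclusions about the support tripotent — is precisely what you defer to \cite{BurPolPerBLMS14} without supplying it. For comparison, the paper itself does not reprove the statement: it observes, after inspecting the proof of \cite[Proposition 2.2]{BurPolPerBLMS14}, that that proof only uses the boundedness of $T$ and the identity $\{a,T(b),c\}=0$ for $a,c\perp b$, and then quotes it; a blind reconstruction therefore has to reproduce that technical core, which your proposal does not.
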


\begin{remark}\label{remark orthogonal forms is not enough for triple derivation} We observe that a mapping satisfying the hypothesis of the above Proposition \ref{p local triple derivations on compact tripotents via orthognal forms} need not be, in general a triple derivation. Take, for example a unital C$^*$-algebra $A$, a non-normal element $x_0\in A$, and the mapping $T: A\to A$ defined by $T(a) :=[x_0,a] = x_0 a - a x_0$. Since $T\{1,x_0,1\} = T(x_0^*) = x_0 x_0^* - x_0^* x_0 \neq 0$, and $2\{T(1), x_0, 1\} + \{1,T(x_0),1\} = 0,$ we deduce that $T$ is not a triple derivation. However, given $a,b,c\in A$ with $a,c\perp b$ {\rm(}i.e. $a b^* = b^* a= cb^* = b^* c=0${\rm)} we have $$2 \{a,T(b),c\} = a [x_0,b]^* c + c [x_0,b]^* a = 0.$$
\end{remark}

Given two tripotents $e$ and $u$ in a JB$^*$-triple $E$ we write $u \leq e$ if $e-u$ is a tripotent in $E$ and $e-u \perp u$.\smallskip

Our next lemma gives a condition, which added to the hypothesis in Proposition \ref{p local triple derivations on compact tripotents via orthognal forms}, avoids the difficulties appearing by the counterexample given in Remark \ref{remark orthogonal forms is not enough for triple derivation}.

\begin{lemma}\label{l weak-local triple deriv compact b} Let $T: E\to E$ be a weak-local triple derivation on a JB$^*$-triple. Let $a$ be a norm-one element in $E$, and let $e$ be a tripotent in $E^{**}$ such that $e\leq s(a)$ in $E_2^{**} (e)$, that is, a tripotent satisfying $P_2 (e) (a)= \{e,a,e\} = e$. Then $P_2 (e) T(a) = -Q(e) T(a)$  in $E^{**}$.
\end{lemma}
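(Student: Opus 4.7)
The plan is to apply Lemma \ref{l projections} to the real-linear contractive projection $\mathcal{Q} := \frac{1}{2}(P_{2}(e) + Q(e))$ on $E^{**}$; one checks $\mathcal{Q}^{2}=\mathcal{Q}$ from $P_{2}(e)^{2}=P_{2}(e)=Q(e)^{2}$ and $P_{2}(e)Q(e)=Q(e)=Q(e)P_{2}(e)$, and its range is the self-adjoint part of the JBW$^{*}$-algebra $E_{2}^{**}(e)$. The conclusion $P_{2}(e)T(a)=-Q(e)T(a)$ is precisely $\mathcal{Q}(T(a))=0$, so by Lemma \ref{l projections} it suffices to show $\phi(T(a))=0$ for every continuous functional $\phi$ on $E^{**}$ satisfying $\phi\,\mathcal{Q}=\phi$. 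Given such a $\phi$, the weak-local hypothesis applied to $\phi|_{E}\in E^{*}$ supplies a triple derivation $\delta=\delta_{a,\phi|_{E}}:E\to E$ with $\phi|_{E}(T(a))=\phi|_{E}(\delta(a))$, and since $\phi=\phi\,\mathcal{Q}$ this gives $\phi(T(a))=\phi(\delta(a))=\phi(\mathcal{Q}(\delta^{**}(a)))$. The goal therefore upgrades to: for every triple derivation $\delta:E\to E$, $\mathcal{Q}(\delta^{**}(a))=0$ in $E^{**}$.

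The first step is to reduce the hypothesis on $a$ to the decomposition $a=e+a_{0}$ with $a_{0}\in E_{0}^{**}(e)$, i.e., $P_{1}(e)(a)=0$. The condition $e\leq s(a)$, together with $s(a)\leq a\leq r(a)$ in the JBW$^{*}$-algebra $E_{2}^{**}(r(a))$, places $a$ in the order interval $[e,r(a)]$ of this unital JBW$^{*}$-algebra; the Jordan analogue of the C$^{*}$-algebraic identity $eae=e$ with $a\leq 1$ (extracted from $U_{e}(r(a)-a)=0$ and the positivity of $r(a)-a$) gives $e\circ a=e$, which at the level of the Peirce decomposition of $a$ relative to $e$ reads $P_{1}(e)(a)=0$. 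Granted this, $\delta^{**}(a)=\delta^{**}(e)+\delta^{**}(a_{0})$ with $a_{0}\perp e$. Identity \eqref{eq derivation at a tripotent} applied to $\delta^{**}$ yields $P_{0}(e)\delta^{**}(e)=0$ and $P_{2}(e)\delta^{**}(e)=-Q(e)\delta^{**}(e)$, so $\mathcal{Q}(\delta^{**}(e))=0$. For the second summand, $\{e,e,a_{0}\}=0$ combined with the Leibniz rule, the Peirce arithmetic (in particular $\{E_{0}^{**}(e),E_{2}^{**}(e),E^{**}\}=\{0\}$), and $P_{0}(e)\delta^{**}(e)=0$ collapses the Peirce-$2(e)$ component of $\delta^{**}\{e,e,a_{0}\}=0$ to $P_{2}(e)\delta^{**}(a_{0})=0$; since $Q(e)$ is supported on $E_{2}^{**}(e)$, also $Q(e)\delta^{**}(a_{0})=0$. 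Hence $\mathcal{Q}(\delta^{**}(a))=0$, as required.

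The main obstacle is the intermediate reduction $P_{1}(e)(a)=0$: expanding $\delta^{**}\{e,a,e\}=\delta^{**}(e)$ directly introduces a cross term proportional to $\{P_{1}(e)\delta^{**}(e),P_{1}(e)(a),e\}$ that is not killed by $\mathcal{Q}$ on the strength of the derivation identity alone, so the vanishing of $P_{1}(e)(a)$ must be secured a priori from the structural interplay between the norm-one constraint on $a$ and the order relation $e\leq s(a)$. Once this Jordan-algebraic reduction is in place, the remainder of the proof is a routine application of Peirce calculus, identity \eqref{eq derivation at a tripotent}, the weak-local hypothesis, and Lemma \ref{l projections}.
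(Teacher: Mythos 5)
Your proposal is correct and follows essentially the same route as the paper: it reduces, via Lemma \ref{l projections} and the weak-local hypothesis, to showing that $\frac12\left(P_2(e)+Q(e)\right)\delta(a)=0$ for every triple derivation $\delta$, using the decomposition $a=e+P_0(e)(a)$ together with \eqref{eq derivation at a tripotent} applied to $\delta^{**}$ at $e$. The only minor differences are that the paper simply asserts the (standard) fact $P_1(e)(a)=0$ where you derive it from the order relation, and it shows $\delta^{**}P_0(e)(a)\in E_0^{**}(e)\oplus E_1^{**}(e)$ by taking a cube root of $P_0(e)(a)$ in $E_0^{**}(e)$ and using Peirce arithmetic, rather than your (equally valid) Leibniz-rule argument on $\{e,e,P_0(e)(a)\}=0$.
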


\begin{proof} Let $\delta : E\to E$ be a triple derivation. It is known that $\delta^{**} : E^{**}\to E^{**}$ is a triple derivation. In $E^{**}$, the element $a$ writes in the form $a = e + P_0 (e) (a)$. A new application of the continuous triple functional calculus and the fact that $E_0^{**} (e)$ is a subtriple of $E^{**}$ assures the existence of $z\in E^{**}_0 (e)$ such that $\{z,z,z\} = P_0 (e) (a)$. In particular, $\delta^{**}  P_0 (e) (a) = 2 \{\delta^{**} (z),z,z\} + \{z,\delta^{**}  (z), z\}$ lies in $E^{**}_0 (e) \oplus E_1^{**} (e)$, by Peirce arithmetic. Therefore, by \eqref{eq derivation at a tripotent} and Peirce arithmetic, we have $$Q (e) \delta (a) = Q(e) \delta^{**} (e) +  Q(e) \delta^{**}  P_0 (e) (a) = Q(e) \delta^{**} (e) = -P_2 (e) \delta^{**} (e),$$ which implies that $P_2 (e) \delta (a) = -Q (e) \delta^{**} (e).$\smallskip

Now take $\phi \in E^{*}$ satisfying $\phi = \phi \frac12( P_2 (e) + Q(e))$. It follows from the hypothesis and the above arguments that $$\phi T(a) = \phi \delta_{a,\phi} (a) = \phi \frac12( P_2 (e) + Q(e)) \delta_{a,\phi} (a) = \phi \frac12( P_2 (e) + Q(e)) \delta_{a,\phi}^{**} (e) = 0,$$ where the last equality follows from \eqref{eq derivation at a tripotent}. Lemma \ref{l projections} implies that $P_2 (e) T(a) = -Q(e) T(a)$.
\end{proof}

Let $\varphi$ be a norm-one functional in the predual of a JBW$^*$-triple $W$. B. Russo and Y. Friedman prove in \cite[Proposition\ 2]{FriRu85} the existence of a unique tripotent $e= e(\varphi) \in W$ satisfying $\varphi = \varphi P_{2} (e),$ and $\varphi|_{W_{2}(e)}$ is a faithful normal
state of the JBW$^*$-algebra $W_{2} (e)$. This unique tripotent $e$ is called the \emph{support tripotent} of
$\varphi$.\smallskip

We shall prove next that the property obtained in the conclusion of the above Lemma \ref{l weak-local triple deriv compact b} is enough to guarantee the automatic continuity of a linear mapping.

\begin{proposition}\label{p second necessary condition for weaklocal der implies continuity} Let $T: E\to E$ be a linear mapping on a JB$^*$-triple. Suppose that for every norm-one element $a$ in $E$, and every tripotent $e$ in $E^{**}$ such that $e\leq s(a)$ in $E_2^{**} (e)$ we have $P_2 (e) T(a) = -Q(e) T(a)$ in $E^{**}$. Then $T$ is continuous.
\end{proposition}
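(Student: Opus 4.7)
The plan is to show that hypothesis $(h2)$ forces $T$ to be dissipative, after which continuity is immediate from the theory of dissipative linear operators on Banach spaces, exactly as in the proof of Theorem~\ref{t continuity of weak-local triple derivations} (see \cite[Proposition~3.1.15]{BraRo}).

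To establish dissipativity, fix $\phi\in E^{*}$ and $x\in E$ with $\|\phi\|=\phi(x)=\|x\|=1$. Viewing $\phi$ as a norm-one element of the predual of the JBW$^{*}$-triple $E^{**}$, the Friedman--Russo result \cite[Proposition~2]{FriRu85} supplies a unique support tripotent $e=e(\phi)\in E^{**}$ with $\phi=\phi P_{2}(e)$ such that $\phi|_{E^{**}_{2}(e)}$ is a faithful normal state of the JBW$^{*}$-algebra $E^{**}_{2}(e)$. The first step is to verify that $P_{2}(e)(x)=e$ (equivalently, $e\leq s(x)$ in $E^{**}_{2}(e)$). Set $y:=P_{2}(e)(x)\in E^{**}_{2}(e)$; since Peirce projections are contractive, $\|y\|\leq 1$, while $\phi(y)=\phi(x)=1=\phi(e)$. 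Decomposing $y=u+iv$ into self-adjoint parts in the JB$^{*}$-algebra $E^{**}_{2}(e)$, the isometry of the involution gives $\|u\|\leq 1$, so $e-u\geq 0$; faithfulness of $\phi|_{E^{**}_{2}(e)}$ together with $\phi(e-u)=0$ then forces $u=e$. The identity $\|e+iv\|^{2}=\|e+v^{2}\|=1+\|v\|^{2}$, valid in the commutative JB$^{*}$-subalgebra generated by $v$, combined with $\|y\|\leq 1$ yields $v=0$, so that $y=e$.

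With $P_{2}(e)(x)=e$ in hand, hypothesis $(h2)$ applies and gives $P_{2}(e)T(x)=-Q(e)T(x)$ in $E^{**}$. Setting $w:=P_{2}(e)T(x)\in E^{**}_{2}(e)$ and using that $Q(e)$ annihilates the Peirce-$0$ and Peirce-$1$ summands (by Peirce arithmetic), one has $Q(e)T(x)=Q(e)(w)=w^{*_{e}}$, so $w=-w^{*_{e}}$ is skew-adjoint in the JB$^{*}$-algebra $E^{**}_{2}(e)$. Since $\phi|_{E^{**}_{2}(e)}$ is a state, $\phi(w^{*_{e}})=\overline{\phi(w)}$, and therefore $\phi(w)=-\overline{\phi(w)}$ is purely imaginary. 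Combining with $\phi T(x)=\phi P_{2}(e)T(x)=\phi(w)$ gives $\Re\hbox{e}\,\phi T(x)=0\leq 0$, witnessing that $T$ is dissipative. Continuity then follows from \cite[Proposition~3.1.15]{BraRo}, just as in Theorem~\ref{t continuity of weak-local triple derivations}.

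The main technical obstacle is the sub-claim $P_{2}(e)(x)=e$: it rests essentially on the faithfulness of $\phi|_{E^{**}_{2}(e)}$ and on the JB$^{*}$-algebra identity $\|e+iv\|^{2}=1+\|v\|^{2}$ that replaces the familiar C$^{*}$-computation $\|(e-iv)(e+iv)\|=1+\|v\|^{2}$. Once this is settled, the remainder is a direct adaptation of the dissipativity scheme already used for weak-local triple derivations.
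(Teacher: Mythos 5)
Your proof is correct, and it follows the paper's overall strategy: fix $\phi\in E^*$ and a norm-one $x$ with $\phi(x)=1=\|\phi\|$, pass to the support tripotent $e=e(\phi)$ of \cite[Proposition 2]{FriRu85}, apply the hypothesis to conclude that $w=P_2(e)T(x)$ is skew-adjoint in $E_2^{**}(e)$, deduce that the real part of $\phi T(x)$ vanishes, and invoke \cite[Proposition 3.1.15]{BraRo}. Where you genuinely diverge is in the step that legitimises applying the hypothesis: the paper shows $e\leq s(x)$ by quoting \cite[Proposition 2.5]{PeExpo2015} to see that $\phi|_{E_x}$ is a triple homomorphism, hence $\phi(s(x))=1$, and then uses the defining properties of the support tripotent; you instead prove directly that $P_2(e)(x)=e$ (which is exactly the form of the condition ``$e\leq s(x)$ in $E_2^{**}(e)$'' spelled out in Lemma \ref{l weak-local triple deriv compact b}) by an elementary argument inside the JBW$^*$-algebra $E_2^{**}(e)$: faithfulness of the state $\phi|_{E_2^{**}(e)}$ forces the self-adjoint part of $P_2(e)(x)$ to be $e$, and the norm identity $\|e+iv\|^2=1+\|v\|^2$ in the commutative subalgebra generated by $v$ kills the skew part. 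Both routes are valid; yours buys a self-contained proof that avoids the external citation (and in effect reproves the standard fact that $e(\phi)\leq s(x)$ when $\phi$ attains its norm at $x$), while the paper's is shorter given the reference. As a bonus, your final computation is slightly more careful than the paper's displayed chain, since you only claim that $\phi(w)$ is purely imaginary, which is precisely what dissipativity requires.
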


\begin{proof} Let us pick $\phi\in E^{*}$, $a\in E$ with $\|a\|=1=\|\phi\| =\phi (a)$. Let $E_a$ be the JB$^*$-subtriple of $E$ generated by $a$. Proposition 2.5 in \cite{PeExpo2015} implies that $\phi|_{E_a}$ is a triple homomorphism. Regarding $\phi$ as a normal functional in the predual of $E^{**}$, we can deduce, via weak$^*$ density that $\phi$ is a triple homomorphism when restricted to the weak$^*$-closure of $E_{a}$ in $E^{**}$. In particular, $\phi (s(a)) =1$. Let $e$ denote the support tripotent of $\phi $ in $E^{**}$. It follows from the properties defining the support tripotent that $s(a)\geq e$. The hypothesis on $T$ implies that $P_2 (e) T(a) = -Q(e) T(a)$ in $E^{**}$. Since $\phi|_{E^{**}_2 (e)}$ is a faithful normal positive functional in the JBW$^*$-algebra $(E_2^{**} (e), \circ_{e},Q(e))$, and $\phi = \phi P_2 (e)$, we have $$ \phi T(a) = \phi P_2(e) T(a) =  \phi \left(\frac{P_2(e)+Q(e)}{2} T(a)\right) =0.$$ We have therefore shown that $T$ is dissipative, and hence continuous (cf. \cite[Proposition 3.1.15]{BraRo}).
\end{proof}

Combining Lemma \ref{l 4 BurFerGarPe} with Proposition \ref{p local triple derivations on compact tripotents via orthognal forms} we can now derive now a weak-local version of \cite[Propostion 2.2]{BurPolPerBLMS14}.

\begin{proposition}\label{p weak-local triple derivations on compact tripotents new} Let $T : E \to E$ be a weak-local triple derivation on a JB$^*$-triple. Suppose $e$ is a compact tripotent in $E^{**}.$ Then the following statements hold: \begin{enumerate}[$(a)$]
\item $P_0 (e) T^{**}(e) =0$;
\item If $a$ is a norm-one element in $E$ whose support tripotent is $e$ (that is, $e$ is a compact-$G_{\delta}$ tripotent), then $Q (e) T(a) = Q (e) T^{**} (e);$
\item $P_2(e) T^{**}(e) = - Q(e) T^{**}(e);$
\item $T^{**} \{e,e,e\} = 2 \{T^{**} (e),e,e\} + \{e,T^{**} (e), e\}$.
\end{enumerate}
\end{proposition}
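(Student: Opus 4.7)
The plan is to derive $(a)$ and $(b)$ directly from Proposition \ref{p local triple derivations on compact tripotents via orthognal forms}, and then to use Lemma \ref{l weak-local triple deriv compact b} to bootstrap to $(c)$ and $(d)$. Since Lemma \ref{l 4 BurFerGarPe} already establishes that every weak-local triple derivation $T$ satisfies $\{x,T(y),z\}=0$ whenever $x,z\perp y$, the hypotheses of Proposition \ref{p local triple derivations on compact tripotents via orthognal forms} apply to our $T$, and the conclusions $(a)$ and $(b)$ of the present proposition follow verbatim.

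For $(c)$, I would first treat the compact-$G_\delta$ case $e=s(a)$ with $a\in E$ of norm one. The property $s(a)\le a$ in $E_2^{**}(r(a))$ gives $P_2(e)(a)=e$, so the hypothesis of Lemma \ref{l weak-local triple deriv compact b} (with $e=s(a)$ itself) is met, yielding $P_2(e)T(a)=-Q(e)T(a)$ in $E^{**}$. Combined with $(b)$, this rewrites as $P_2(e)T(a)=-Q(e)T^{**}(e)$. On the other hand, decompose $a=e+w$ with $w:=a-e\in E_0^{**}(e)$, and use $T^{**}(a)=T(a)$ to write $T(a)=T^{**}(e)+T^{**}(w)$. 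Applying $Q(e)$ to this identity and using $(b)$ gives $Q(e)T^{**}(w)=0$. Since by Peirce arithmetic $Q(e)$ annihilates $E_1^{**}(e)\oplus E_0^{**}(e)$ and acts as the JB$^*$-algebra involution $\ast_{e}$ on $E_2^{**}(e)$, it is injective on $E_2^{**}(e)$, whence $P_2(e)T^{**}(w)=0$. Projecting $T(a)=T^{**}(e)+T^{**}(w)$ by $P_2(e)$ then gives $P_2(e)T^{**}(e)=P_2(e)T(a)=-Q(e)T^{**}(e)$, which is $(c)$ in the compact-$G_\delta$ case.

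For an arbitrary compact tripotent $e\in E^{**}$, I would write $e=\inf_{\lambda}e_\lambda$ for a decreasing net $(e_\lambda)$ of compact-$G_\delta$ tripotents, as provided by the definition of compactness. Such a net converges to $e$ in the strong$^*$ topology of the JBW$^*$-triple $E^{**}$, on bounded sets the triple product is jointly strong$^*$-continuous, and $T^{**}$ is weak$^*$-continuous. These ingredients let me pass the identity $(P_2(e_\lambda)+Q(e_\lambda))T^{**}(e_\lambda)=0$ (already established for each $\lambda$) to the limit, producing $(c)$ in full generality.

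Finally, $(d)$ is a purely algebraic consequence of $(a)$ and $(c)$: using $(a)$ we have $T^{**}(e)=P_1(e)T^{**}(e)+P_2(e)T^{**}(e)$, and together with $2L(e,e)=2P_2(e)+P_1(e)$ one checks
\[
2\{T^{**}(e),e,e\}+\{e,T^{**}(e),e\} \;=\; T^{**}(e)+\bigl(P_2(e)+Q(e)\bigr)T^{**}(e),
\]
which by $(c)$ equals $T^{**}(e)=T^{**}\{e,e,e\}$. The main obstacle I anticipate is the limiting step for a general compact tripotent in $(c)$: the Peirce projections depend on the tripotent, so one needs to be careful to exploit the strong$^*$ convergence of a decreasing net of tripotents together with the joint strong$^*$-continuity of the triple product on bounded subsets of $E^{**}$, rather than merely separate weak$^*$-continuity.
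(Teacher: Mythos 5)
Your proposal is correct and, in substance, follows the same route as the paper: $(a)$ and $(b)$ are read off from Proposition \ref{p local triple derivations on compact tripotents via orthognal forms} via Lemma \ref{l 4 BurFerGarPe}, $(c)$ comes from combining Lemma \ref{l weak-local triple deriv compact b} (applied with $e=s(a)$) with $(b)$, and $(d)$ is the same Peirce computation from $(a)$ and $(c)$. Three comparative remarks. First, inside $(c)$ the paper obtains $P_2(e)T(a)=P_2(e)T^{**}(e)$ in one line, by applying $Q(e)$ to the identity in $(b)$ and using $Q(e)^2=P_2(e)$; your detour through $w=a-e$, $Q(e)T^{**}(w)=0$ and injectivity of $Q(e)$ on $E_2^{**}(e)$ proves the same thing, only less economically. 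Second, you make explicit something the printed proof leaves implicit: as written, the paper's argument for $(c)$ uses an element $a$ with $s(a)=e$, hence covers the compact-$G_\delta$ case, and you add an approximation step to reach arbitrary compact tripotents. That is the right idea, but your sketch is not yet internally consistent: joint strong$^*$-continuity of the triple product on bounded sets needs strong$^*$ convergence of \emph{all three} entries, whereas weak$^*$-continuity of $T^{**}$ only yields $T^{**}(e_\lambda)\to T^{**}(e)$ in the weak$^*$ topology. To close this you should either invoke the known fact that a weak$^*$-continuous bounded operator between JBW$^*$-triples is automatically strong$^*$-continuous, or use the mixed continuity of the triple product (strong$^*$, weak$^*$, strong$^*$ entries, weak$^*$ limit) on bounded sets; you should also justify that a decreasing net of tripotents converges strong$^*$ to its infimum, e.g.\ by viewing the tail of the net as a decreasing net of projections in the JBW$^*$-algebra $E_2^{**}(e_{\lambda_0})$. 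Third, a small omission: both the formation of $T^{**}$ and the application of Proposition \ref{p local triple derivations on compact tripotents via orthognal forms} require $T$ to be bounded, so you should cite Theorem \ref{t continuity of weak-local triple derivations} at the outset, exactly as the paper does.
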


\begin{proof} The first two statements follow from Theorem \ref{t continuity of weak-local triple derivations}, Lemma \ref{l 4 BurFerGarPe}, and Proposition \ref{p local triple derivations on compact tripotents via orthognal forms}. To prove $(c)$ we observe that, by $(b)$, $Q (e) T(a) = Q (e) T^{**} (e)$, and hence $P_2 (e) T(a) = P_2 (e) T^{**} (e)$. Lemma \ref{l weak-local triple deriv compact b} gives $P_2 (e) T(a) = -Q(e) T(a)$, which proves the desired equality. Finally, $(d)$ follows from $(a)$ and $(c)$.
\end{proof}

We can also derive now a technical variant of the previous result.

\begin{proposition}\label{p technical variation weak-local triple derivations on compact tripotents new} Let $T : E \to E$ be a linear mapping on a JB$^*$-triple satisfying the following hypothesis \begin{enumerate}[$(h1)$]\item $\{a,{T(b)},c\}=0$ for every $a,b,c$ in $E$ with $a, c\perp b$;
\item $P_2 (e) T(a) = -Q(e) T(a)$ for every norm-one element $a$ in $E$, and every tripotent $e$ in $E^{**}$ such that $e\leq s(a)$ in $E_2^{**} (e)$.
\end{enumerate}
Suppose $e$ is a compact tripotent in $E^{**}.$ Then the following statements hold: \begin{enumerate}[$(a)$]
\item $P_0 (e) T^{**}(e) =0$;
\item If $a$ is a norm-one element in $E$ whose support tripotent is $e$ (that is, $e$ is a compact-$G_{\delta}$ tripotent), then $Q (e) T(a) = Q (e) T^{**} (e);$
\item $P_2(e) T^{**}(e) = - Q(e) T^{**}(e);$
\item $T^{**} \{e,e,e\} = 2 \{T^{**} (e),e,e\} + \{e,T^{**} (e), e\}$.
\end{enumerate}
\end{proposition}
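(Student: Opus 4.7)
The plan is to parallel the proof of Proposition \ref{p weak-local triple derivations on compact tripotents new}, except that every appeal to Theorem \ref{t continuity of weak-local triple derivations} and to Lemmas \ref{l 4 BurFerGarPe} and \ref{l weak-local triple deriv compact b} must be replaced by a direct use of hypotheses $(h1)$ and $(h2)$. First I would record that, by Proposition \ref{p second necessary condition for weaklocal der implies continuity}, hypothesis $(h2)$ already forces $T$ to be continuous, so the bitranspose $T^{**}:E^{**}\to E^{**}$ is a well-defined bounded operator. Since $(h1)$ is verbatim the hypothesis of Proposition \ref{p local triple derivations on compact tripotents via orthognal forms}, that result applies to the compact tripotent $e$ and delivers statements $(a)$ and $(b)$ with no extra work.

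For $(c)$ I would start with the compact-$G_{\delta}$ case. Fix a norm-one $a\in E$ with $s(a)=e$; part $(b)$ provides $Q(e)T(a) = Q(e)T^{**}(e)$, and applying $Q(e)$ once more, together with $Q(e)^{2}= P_{2}(e)$, yields $P_{2}(e)T(a) = P_{2}(e)T^{**}(e)$. Since $e\leq s(a)$ holds trivially, hypothesis $(h2)$ gives $P_{2}(e)T(a) = -Q(e)T(a)$, and the desired identity $P_{2}(e)T^{**}(e) = -Q(e)T^{**}(e)$ falls out by chaining these three equalities. To upgrade from compact-$G_{\delta}$ to an arbitrary compact tripotent $e$, I would use the defining property $e=\inf e_{\lambda}$ for a decreasing net $(e_{\lambda})$ of compact-$G_{\delta}$ tripotents, and pass the identity to the weak$^{*}$-limit via the weak$^{*}$-continuity of Peirce projections and the separate weak$^{*}$-continuity of the triple product. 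This limiting step is the one I expect to be the main technical nuisance, since one needs the simultaneous passage to the limit of $Q(e_{\lambda})T^{**}(e_{\lambda})$ and $P_{2}(e_{\lambda})T^{**}(e_{\lambda})$ in compatible topologies; everything else is bookkeeping.

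Finally, $(d)$ is a purely algebraic consequence of $(a)$, $(c)$, and the Peirce decomposition together with the identity $P_{1}(e) = 2 L(e,e) - 2 P_{2}(e)$: write $T^{**}(e) = P_{0}(e)T^{**}(e) + P_{1}(e)T^{**}(e) + P_{2}(e)T^{**}(e)$, kill the $P_{0}$ summand via $(a)$, substitute $P_{2}(e)T^{**}(e) = -Q(e)T^{**}(e)$ from $(c)$, and recognise $Q(e)T^{**}(e) = \{e,T^{**}(e),e\}$. The formula $T^{**}\{e,e,e\} = 2\{T^{**}(e),e,e\} + \{e,T^{**}(e),e\}$ then emerges exactly as in the proof of Proposition \ref{p weak-local triple derivation at a tripotent}$(c)$.
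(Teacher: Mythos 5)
Your proposal is correct and takes essentially the same route as the paper: continuity from $(h2)$ via Proposition \ref{p second necessary condition for weaklocal der implies continuity}, statements $(a)$ and $(b)$ from Proposition \ref{p local triple derivations on compact tripotents via orthognal forms} using $(h1)$, then $(c)$ by chaining $(b)$ with $(h2)$ applied to $e\leq s(a)$, and $(d)$ by the Peirce computation of Proposition \ref{p weak-local triple derivation at a tripotent}$(c)$. The weak$^*$-limiting upgrade from compact-$G_{\delta}$ to general compact tripotents that you flag is likewise left implicit in the paper, which simply reruns the proof of Proposition \ref{p weak-local triple derivations on compact tripotents new} with $(h2)$ in place of Lemma \ref{l weak-local triple deriv compact b} (resting on the arguments of \cite{BurPolPerBLMS14}), so you are not missing anything relative to the paper's own argument.
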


\begin{proof} The continuity of $T$ follows, via Proposition \ref{p second necessary condition for weaklocal der implies continuity}, from $(h2)$. Statements $(a)$ and $(b)$ can be derived from Proposition \ref{p local triple derivations on compact tripotents via orthognal forms}. If in the proof of Proposition \ref{p weak-local triple derivations on compact tripotents new} we apply the hypothesis $(h2)$ the same argument runs here.
\end{proof}

The technical arguments given in \cite[proof of Corollary 2.3 and comments prior to it, and proof of Theorem 2.4]{BurPolPerBLMS14} actually prove the following result.

\begin{proposition}\label{p BurPoPe Cor 23 to thmm 24} Let $T: E\to E$ be a bounded linear operator on a JB$^*$-triple satisfying that $T^{**} \{e,e,e\} = 2 \{T^{**} (e),e,e\} + \{e,T^{**} (e), e\}$ for every compact tripotent $e\in E^{**}$. Then $T^{**}$ is a triple derivation.
\end{proposition}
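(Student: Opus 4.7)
The plan is to mimic the strategy of \cite[Cor.~2.3 and Thm.~2.4]{BurPolPerBLMS14}, lifting the cube identity from compact tripotents to all of $E^{**}$ by two successive approximation steps and then polarising. First I would apply Proposition~\ref{p weak local for algebraic} to $T^{**}$ on the JBW$^*$-triple $E^{**}$, using that a finite $\pm 1$-linear combination of mutually orthogonal compact tripotents of $E^{**}$ is itself a compact tripotent of $E^{**}$. An inspection of the proof of Proposition~\ref{p weak local for algebraic} shows that only tripotents of the form $e_i$ and $e_i\pm e_j$ built from a fixed mutually orthogonal family are required, all of which remain compact under the hypothesis. Hence $T^{**}\{a,a,a\}=2\{T^{**}(a),a,a\}+\{a,T^{**}(a),a\}$ for every $a\in E^{**}$ which is a finite linear combination of mutually orthogonal compact tripotents in $E^{**}$.

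Second, I would extend this cube identity from such combinations to every $a\in E$ by a norm-approximation argument. Given a norm-one $a\in E$, the abelian JB$^*$-subtriple $E_a\cong C_0(L)$ together with the continuous triple functional calculus at $a$ provides, for each $\varepsilon>0$, a finite linear combination of mutually orthogonal support tripotents of suitable elements of $E_a$ (which are compact-$G_\delta$ tripotents of $E^{**}$ and hence compact) that approximates $a$ within $\varepsilon$ in norm. Since $T$ is bounded and the triple product is jointly norm-continuous, the cube identity passes to every $a\in E$. Polarising via the formula \eqref{polarization fla} then transforms it into the full Leibniz rule $T\{x,y,z\}=\{T(x),y,z\}+\{x,T(y),z\}+\{x,y,T(z)\}$ for all $x,y,z\in E$.

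Finally, I would lift the Leibniz rule to $E^{**}$ one argument at a time. Since $T^{**}$ is $(\sigma(E^{**},E^*),\sigma(E^{**},E^*))$-continuous and the triple product on $E^{**}$ is separately weak$^*$-continuous, both sides of the Leibniz rule are separately weak$^*$-continuous in each variable, and they agree on $E\times E\times E$; Goldstine's theorem applied three times forces agreement on $E^{**}\times E^{**}\times E^{**}$, so $T^{**}$ is a triple derivation. I expect the hard part to be the second step: the careful spectral/functional-calculus bookkeeping needed to produce approximating tripotents that are genuinely compact in $E^{**}$ relative to $E$ and whose linear combinations approximate $a$ in norm. This is the technical heart of the argument and is precisely what the material surrounding \cite[Cor.~2.3 and Thm.~2.4]{BurPolPerBLMS14} supplies.
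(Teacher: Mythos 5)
The decisive flaw is in your second step. The claim that every norm-one $a\in E$ can be approximated in norm by finite linear combinations of \emph{mutually orthogonal} compact($-G_\delta$) tripotents of $E^{**}$ is false in general; that density property is exactly what singles out JBW$^*$-triples and weakly compact JB$^*$-triples in Corollary \ref{c weak local when algebraic are dense}, and its failure is the reason the paper treats the general case through Propositions \ref{p local triple derivations on compact tripotents via orthognal forms}--\ref{p BurPoPe Cor 23 to thmm 24} instead of a density argument. Concretely, take $E=C_0((0,1])$ and $a(t)=t$. The support tripotents of elements of $E_a=E$ are of the form $v=u\,\chi_{K}$ in $E^{**}$, where $K\subseteq (0,1]$ is the (compact) peak set and $u$ is unimodular and continuous on $K$; mutual orthogonality forces the sets $K_i$ to be pairwise disjoint. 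Testing against the point evaluations $\delta_t\in E^{*}$ gives $\|a-\sum_i\lambda_i v_i\|\geq \sup_{t}|t-\sum_i\lambda_i u_i(t)\chi_{K_i}(t)|$. If this supremum were $<\delta$, then $(\delta,1]\subseteq\bigcup_i K_i$, and since $[2\delta,1]$ is connected while the $K_i$ are disjoint closed sets, a single $K_{i_0}$ must contain $[2\delta,1]$; evaluating at $t=1$ and $t=2\delta$ yields $|\,|\lambda_{i_0}|-1|<\delta$ and $|\,|\lambda_{i_0}|-2\delta|<\delta$, impossible for $\delta<1/4$. So the approximants you rely on do not exist whenever the triple spectrum of $a$ is connected, and your Steps 1--2 collapse. (Step 1 has a secondary issue as well: the assertion that $e_i\pm e_j$ is compact for an \emph{arbitrary} orthogonal pair of compact tripotents is not proved anywhere in this paper and is far from obvious; it is harmless when all the tripotents come from the functional calculus of a single element, where the combinations are visibly compact-$G_\delta$, but in that restricted setting the approximation problem above is exactly the one that cannot be solved.)

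What the cited arguments of \cite[proof of Corollary 2.3 and Theorem 2.4]{BurPolPerBLMS14} supply is therefore not what you expect: they do not norm-approximate $a$ by orthogonal compact tripotents (no such approximation exists), but instead exploit the compact-$G_\delta$ tripotents attached to $a$ via its triple functional calculus together with the identities collected in Proposition \ref{p technical variation weak-local triple derivations on compact tripotents new} --- $P_0(e)T^{**}(e)=0$, $Q(e)T(a)=Q(e)T^{**}(e)$, $P_2(e)T^{**}(e)=-Q(e)T^{**}(e)$ --- and limiting arguments in $E^{**}$ to reach the cube identity $T\{a,a,a\}=2\{T(a),a,a\}+\{a,T(a),a\}$ for every $a\in E$, before polarizing. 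Your final step (polarization and the passage from $E$ to $E^{**}$ via separate weak$^*$-continuity of the triple product and weak$^*$-density of $E$) is correct and standard, but the core of the proposition --- getting from the hypothesis at compact tripotents to the cube identity at a general element of a JB$^*$-triple possibly without any tripotents --- is missing from your argument.
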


We can now obtain our main result.

\begin{theorem}\label{t weak-local derivations} Let $T:E\to E$ be a linear mapping on a JB$^*$-triple. The following statements are equivalent:
\begin{enumerate}[$(a)$]\item $T$ is a triple derivation;
\item $T$ is a local triple derivation;
\item $T$ is a weak-local triple derivation;
\item  $\{a,{T(b)},c\}=0$ for every $a,b,c$ in $E$ with $a, c\perp b$ and $P_2 (e) T(a) = -Q(e) T(a)$ for every norm-one element $a$ in $E$, and every tripotent $e$ in $E^{**}$ such that $e\leq s(a)$ in $E_2^{**} (e)$.
\end{enumerate}
\end{theorem}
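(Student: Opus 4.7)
The plan is to prove the chain of implications $(a) \Rightarrow (b) \Rightarrow (c) \Rightarrow (d) \Rightarrow (a)$, leveraging the machinery built up in the preceding propositions. The first two implications are immediate from the definitions: every triple derivation $\delta$ trivially witnesses itself as the derivation required at each point (for $(b)$) or at each pair $(a,\phi)$ (for $(c)$), and a local triple derivation is a weak-local one by composition with any functional.

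For $(c) \Rightarrow (d)$, I would simply invoke the lemmas already in place. Specifically, assuming $T$ is a weak-local triple derivation, Lemma \ref{l 4 BurFerGarPe} yields condition $(h1)$, while Lemma \ref{l weak-local triple deriv compact b} yields condition $(h2)$. No further work is needed here.

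The crux of the argument is $(d) \Rightarrow (a)$. Starting from $(h1)$ and $(h2)$, I would first apply Proposition \ref{p second necessary condition for weaklocal der implies continuity} to obtain that $T$ is continuous (this uses only $(h2)$). Next, Proposition \ref{p technical variation weak-local triple derivations on compact tripotents new}, whose hypotheses are precisely $(h1)$ and $(h2)$, gives that for every compact tripotent $e\in E^{**}$ one has
\[
T^{**}\{e,e,e\} = 2\{T^{**}(e),e,e\} + \{e,T^{**}(e),e\}.
\]
At this point Proposition \ref{p BurPoPe Cor 23 to thmm 24} applies to $T$ and yields that $T^{**}:E^{**}\to E^{**}$ is a triple derivation. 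Since $T^{**}$ extends $T$ and $E$ is a subtriple of $E^{**}$, the restriction of the Leibniz identity from $E^{**}$ to $E$ shows that $T$ itself is a triple derivation, completing the cycle.

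I do not anticipate a real obstacle: all the delicate work---the passage from weak-local information at a single element to rigid information at compact tripotents in the bidual, and the extension of the Leibniz identity from compact tripotents to all of $E^{**}$---has already been carried out in the earlier propositions. The only thing to be careful about is to verify, in the implication $(d) \Rightarrow (a)$, that Proposition \ref{p technical variation weak-local triple derivations on compact tripotents new} is indeed formulated in a way that depends purely on $(h1)$ and $(h2)$ (not on the weak-local hypothesis itself), which the authors have explicitly arranged. Thus the theorem is essentially an assembly statement packaging the preceding results into their most natural equivalence.
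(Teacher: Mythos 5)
Your proposal is correct and follows essentially the same route as the paper: $(a)\Rightarrow(b)\Rightarrow(c)$ are immediate, $(c)\Rightarrow(d)$ comes from Lemmas \ref{l 4 BurFerGarPe} and \ref{l weak-local triple deriv compact b}, and $(d)\Rightarrow(a)$ is assembled from Propositions \ref{p second necessary condition for weaklocal der implies continuity}, \ref{p technical variation weak-local triple derivations on compact tripotents new} and \ref{p BurPoPe Cor 23 to thmm 24}. Your closing remark that the Leibniz identity for $T^{**}$ restricts to $E$ (since $T^{**}|_E = T$ and $E$ is a subtriple of $E^{**}$) is exactly the implicit final step in the paper's argument.
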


\begin{proof} $(a)\Rightarrow (b) \Rightarrow (c)$ are clear. $(c) \Rightarrow (d)$ is a consequence of Lemmas \ref{l 4 BurFerGarPe} and \ref{l weak-local triple deriv compact b}. Finally, $(d) \Rightarrow (a)$ is a consequence of Propositions \ref{p second necessary condition for weaklocal der implies continuity}, \ref{p technical variation weak-local triple derivations on compact tripotents new} and \ref{p BurPoPe Cor 23 to thmm 24}.
\end{proof}

The following consequence is interesting by itself and it compliments the results in \cite{EssaPeRa16}.

\begin{corollary}\label{t weak-local derivations Cstar algebra} Every weak-local triple derivation on a C$^*$-algebra is a triple derivation.
\end{corollary}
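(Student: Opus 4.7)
The plan is to derive this as an immediate corollary of Theorem \ref{t weak-local derivations}. The key observation is that every C$^*$-algebra $A$ carries the canonical JB$^*$-triple structure with triple product $\{a,b,c\}=\tfrac{1}{2}(ab^*c+cb^*a)$, as recalled in Section~2 of the paper. Under this structure, the notion of ``weak-local triple derivation'' defined intrinsically for $A$ as a C$^*$-algebra coincides verbatim with the JB$^*$-triple notion, because both refer to linear maps $T:A\to A$ such that for each $\phi\in A^*$ and each $a\in A$ there is a triple derivation $\delta_{a,\phi}:A\to A$ with $\phi T(a)=\phi\delta_{a,\phi}(a)$, and the underlying dual space $A^*$ is the same in either viewpoint.

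Given a weak-local triple derivation $T:A\to A$, I would therefore regard $A$ as a JB$^*$-triple and invoke the implication $(c)\Rightarrow(a)$ of Theorem \ref{t weak-local derivations} to conclude that $T$ is a triple derivation on the JB$^*$-triple $A$. Since the triple derivation identity on $A$ is exactly the one coming from the canonical triple product $\{a,b,c\}=\tfrac{1}{2}(ab^*c+cb^*a)$, this is exactly the assertion that $T$ is a triple derivation in the C$^*$-algebraic sense, which is what the corollary claims.

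There is really no obstacle beyond reading the definitions correctly: the only point worth noting in the written proof is that the statement concerns \emph{triple} derivations rather than the associative derivations treated in \cite{EssaPeRa16}, so the result complements but does not overlap those of Essaleh, Ram\'irez and the third author. Consequently the write-up can be a single short paragraph citing Theorem \ref{t weak-local derivations}.
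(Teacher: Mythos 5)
Your argument is exactly the intended one: the paper states this corollary as an immediate consequence of Theorem \ref{t weak-local derivations}, since a C$^*$-algebra is a JB$^*$-triple under $\{a,b,c\}=\tfrac12(ab^*c+cb^*a)$ and the weak-local and triple derivation notions coincide with the JB$^*$-triple ones. Your proposal is correct and follows the same route.
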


\end{document}